\newif\ifasterisk
\colorlet{shadecolor}{orange!15}
\theoremstyle{plain}
\newtheorem{theorem}{Theorem}[section]
\newtheorem{claim}{Claim}
\newtheorem{lemma}[theorem]{Lemma}
\newcommand{\Aut}{{\rm Aut}}
\newcommand{\Inn}{{\rm Inn}}
\begin{document}
\title{Spectral gap characterizations of property (T) for II$_1$ factors}

\author[]{Hui Tan}

\address{Department of Mathematics, University of California San Diego, 9500 Gilman Drive, La Jolla, CA 92093, USA}\email{hutan@ucsd.edu}

\thanks{The author was supported in part by NSF FRG Grant \#1854074.}

\begin{abstract}
    For II$_1$ factors, we show that property (T) is equivalent to having weak spectral gap in any inclusion into a larger tracial von Neumann algebra. 
    We also show that not having non-zero almost central vectors in weakly mixing bimodules  characterizes property (T) for II$_1$ factors, 
    which allows us to obtain a stronger characterization of property (T) where only weak spectral gap in any irreducible inclusion is required.
\end{abstract}

\maketitle

\section{Introduction}
\emph{Property (T)} for locally compact groups was defined in terms of their unitary representations by Kazhdan in the 1960's, 
to show finite generation of lattices in higher rank simple Lie groups (\cite{Ka67}).
The \emph{relative} version of property (T) was formulated explicitly by Margulis, in answering the Ruziewicz problem for $\mathbb{R}^n$, $n\ge 3$ (\cite{Ma82}).  
Using the notion of bimodules for von Neumann algebras, 
Connes introduced property (T) for II$_1$ factors (\cite{Co80}), so that a group II$_1$ factor $L(G)$ has property (T) if and only if the group $G$ has property (T), and this was generalized to tracial von Neumann algebras by Connes and Jones (\cite{CJ83}).
The notion of relative property (T) for inclusions of tracial von Neumann algebras was introduced by Popa in \cite{Po01},
where it was used to provide the first examples of II$_1$ factors with trivial fundamental group.
Property (T) has important applications in subjects such as representation theory, ergodic theory, expander graphs and operator algebras. It has characterizations from different viewpoints, many of which can be generalized to the case of relative property (T).

A closely related property of unitary representations is the \emph{spectral gap} property, which requires that the restriction of the representation to the orthogonal complement of the space of invariant vectors does not have almost invariant vectors.
For a II$_1$ factor $M$, the analogue of this property concerns the conjugation representation of the unitary group $\mathcal{U}(M)$,
which was used by Connes to characterize fullness for II$_1$ factors (\cite{Co74}, \cite{Co75}).
Popa formulated the notion of \emph{spectral gap for inclusions of von Neumann algebras} $M\subset\tilde M$ by requiring that the conjugation representation of $\mathcal{U}(M$) on $L^2(\tilde M)$ has spectral gap.
Popa also formulated a weaker notion, of \emph{weak spectral gap}, where only vectors in $ (\tilde{M})_1 \subset L^2(\tilde M) $ are considered (\cite{Po09}, see Section \ref{section spectral gap} for precise definitions).
These notions of spectral gap have led to many rigidity results for von Neumann algebras,
starting with \cite{Po06a}, which introduced the idea of spectral gap rigidity in a new proof of Ozawa's solidity theorem (\cite{Oz03}), and \cite{Po06b}, which combines spectral gap rigidity with malleability, giving cocycle superrigidity for Bernoulli actions of products groups. See also \cite{Po09} for more applications where spectral gap rigidity is combined with inductive limits deformation.

For a II$_1$ factor $M$ with property (T), any inclusion $M \subset \tilde{M}$ has spectral gap, 
and therefore weak spectral gap (see \cite{Po09}, and also Section \ref{prelim}). 
It is a natural question whether this phenomenon characterizes property (T). This question was explicitly posed by Goldbring in Remark 2.19 of \cite{Go20}, who asked whether a II$_1$ factor that has weak spectral gap in any extension must have property (T).
Our first main result answers this question positively 
(Below, $\omega$ is an ultrafilter and $M^{\omega}$ is the ultrapower algebra. See preliminaries, \S5.4 of \cite{AP18}, and \cite{Po09} for more details).
\begin{theorem}\label{wspectralgapimpliest}
A $\mathrm {II}_{1}$ factor $M$ has property (T) if and only if any inclusion of $M$ into a tracial von Neumann algebra $\Tilde{M}$ has weak spectral gap, i.e., $M' \cap \Tilde{M}^{\omega} = (M' \cap \Tilde{M})^{\omega} $.
\end{theorem}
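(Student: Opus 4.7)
The forward direction is essentially classical: property (T) gives full spectral gap in any inclusion $M\subset\tilde{M}$ (the bimodule $L^2(\tilde{M})\ominus L^2(M'\cap\tilde{M})$ carries no almost central unit vectors), hence weak spectral gap. My plan therefore focuses on the converse, which I would prove by contrapositive: assume $M$ lacks property (T) and produce an explicit tracial extension witnessing failure of weak spectral gap.

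By the Connes--Jones bimodule characterization, failure of property (T) for $M$ supplies an $M$-$M$ Hilbert bimodule $\mathcal{H}_0$ carrying a sequence of unit vectors $(\xi_n)$ that are almost central ($\|a\xi_n-\xi_n a\|\to 0$ for each $a\in M$) but with no non-zero $M$-central vector. Passing to the symmetrization $\mathcal{H}:=\mathcal{H}_0\oplus\overline{\mathcal{H}_0}$ yields a symmetric $M$-bimodule with the same two properties, since central vectors of $\overline{\mathcal{H}_0}$ correspond to central vectors of $\mathcal{H}_0$. I then apply the $M$-valued free Gaussian / semicircular construction of Shlyakhtenko: realize a tracial von Neumann algebra $\tilde{M}\supset M$ on the $M$-valued free Fock space $\mathcal{F}_M(\mathcal{H})=L^2(M)\oplus\mathcal{H}\oplus(\mathcal{H}\otimes_M\mathcal{H})\oplus\cdots$, with $\tilde{M}$ generated by $M$ and the bounded self-adjoint semicircular operators $s(\xi)=\ell(\xi)+\ell(\xi)^*$ for $\xi\in\mathcal{H}$ (with $\|s(\xi)\|\le 2\|\xi\|$). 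The symmetry of $\mathcal{H}$ is what makes the vacuum state tracial and gives the canonical identification $L^2(\tilde{M})=\mathcal{F}_M(\mathcal{H})$.

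Set $x_n:=s(\xi_n)$. Using $[a,\ell(\xi)]=\ell(a\xi-\xi a)$ on the Fock space, the identity $\|\ell(\eta)\|_2=\|\eta\|_{\mathcal{H}}$, and the asymptotic centrality of $(\xi_n)$ for every element of $M$, one verifies $\|[u,x_n]\|_2\to 0$ for all $u\in\mathcal{U}(M)$, so $(x_n)_\omega\in M'\cap\tilde{M}^\omega$. Suppose, aiming for a contradiction, that $(x_n)_\omega=(y_n)_\omega$ for some $y_n\in M'\cap\tilde{M}$. Each $y_n$ commutes with $M$, so $\hat{y_n}=y_n\hat{1}\in L^2(\tilde{M})=\mathcal{F}_M(\mathcal{H})$ is $M$-central, and hence its projection $P_{\mathcal{H}}\hat{y_n}$ onto the first Fock layer $\mathcal{H}$ is an $M$-central vector of $\mathcal{H}$. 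The no-central-vector hypothesis forces $P_{\mathcal{H}}\hat{y_n}=0$, while $P_{\mathcal{H}}\hat{x_n}=\xi_n$ (since $\ell(\xi_n)\hat{1}=\xi_n$ and $\ell(\xi_n)^*\hat{1}=0$). Thus $\|\xi_n\|\le\|x_n-y_n\|_2\to 0$ along $\omega$, contradicting $\|\xi_n\|=1$. This produces an inclusion $M\subset\tilde{M}$ violating weak spectral gap, as required.

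The main obstacle lies in the Gaussian step: correctly invoking Shlyakhtenko's $M$-valued semicircular construction over the symmetric bimodule $\mathcal{H}$ so that $\tilde{M}$ is genuinely tracial and $L^2(\tilde{M})$ has the required free Fock-space decomposition (the symmetrization of $\mathcal{H}_0$ is precisely what supplies the data for the vacuum state to be a trace). Once the tracial extension and Fock-space identification are in place, the remaining steps — the bounded asymptotic centrality of the sequence $s(\xi_n)$ and the projection onto the first Fock layer $\mathcal{H}$ — are essentially formal.
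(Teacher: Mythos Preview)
Your outline follows the same strategy as the paper (Shlyakhtenko's $M$-valued semicircular construction applied to a symmetric bimodule with almost central but no central vectors, then project to the first Fock layer), but there is one genuine gap.

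The inequality $\|s(\xi)\|\le 2\|\xi\|$ is a fact about the \emph{scalar}-valued Fock space; it is false in the $M$-valued setting. On $\mathcal{F}_M(\mathcal{H})$ the creation operator $\ell(\xi)$ is bounded only when $\xi$ is left $M$-bounded (i.e.\ $\xi\in\mathcal{H}^0$), and then $\|\ell(\xi)\|=\|\langle\xi,\xi\rangle_M\|_\infty^{1/2}$, which is not controlled by $\|\xi\|_{\mathcal{H}}$. An arbitrary sequence of almost central unit vectors coming from the Connes--Jones characterization need not lie in $\mathcal{H}^0$, and even if it does, $\|\langle\xi_n,\xi_n\rangle_M\|_\infty$ may blow up, so $(s(\xi_n))_n$ need not define an element of $\tilde{M}^\omega$. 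The paper handles this by first passing to $\mathcal{K}^{\oplus\infty}$ and applying a subtracial approximation (Lemma~\ref{subtracialapproximation}) to replace $(\xi_n)$ by almost central \emph{subtracial} vectors $(\eta_n)$ with $\|\eta_n\|\to 1$; subtraciality gives $\langle\eta_n,\eta_n\rangle_M\le 1$, hence $\|\ell(\eta_n)\|\le 1$ uniformly, and then $(s(\eta_n))_n\in\tilde{M}^\omega$ as needed. You should insert this step before invoking Shlyakhtenko's construction; once it is in place, the rest of your argument (the Fock-layer projection and the contradiction) matches the paper's proof.

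A minor correction: in the $M$-valued construction $s(\xi)=\ell(\xi)+\ell(J\xi)^*$, not $\ell(\xi)+\ell(\xi)^*$; these differ unless $J\xi=\xi$. This does not affect the computation of $\|[a,s(\xi_n)]\|_2$ (both terms contribute $a\xi_n-\xi_n a$ when applied to the vacuum), but you should use the correct formula.
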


We prove Theorem \ref{wspectralgapimpliest} using Shlyakhtenko's \emph{$M$-valued semicircular system construction} for a given symmetric bimodule of a tracial von Neumann algebra $M$ (\cite{Sh97}).
This can be considered as a non commutative analogue of the Gaussian construction for group representations,
which Connes and Weiss used in \cite{CW80} to show that a group has property (T) if every measure preserving ergodic action is strongly ergodic (see also Theorem 6.3.4 and Appendix A.7 of \cite{BdlHV08} for more details).
Then we consider the case of having weak spectral gap only in irreducible inclusions: 
$M' \cap \Tilde{M}^{\omega} =  \mathbb{C}1$ whenever $M' \cap \Tilde{M}= \mathbb{C}1$. 
We show that in fact this weaker condition also characterizes property (T) for II$_1$ factors.
\begin{theorem}\label{wspectralgap irreducible implies t}
A $\mathrm {II}_{1}$ factor $M$ has property (T) if and only if for any inclusion of $M$ into a tracial von Neumann algebra $\Tilde{M}$ where $M' \cap \Tilde{M}= \mathbb{C}1$, we have that  $M' \cap \Tilde{M}^{\omega} =  \mathbb{C}1$.
\end{theorem}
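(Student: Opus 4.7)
The forward direction follows immediately from Theorem \ref{wspectralgapimpliest}, since weak spectral gap in \emph{every} inclusion certainly implies weak spectral gap in every \emph{irreducible} inclusion. All content therefore lies in the converse, which I would prove contrapositively: starting from a II$_1$ factor $M$ without property (T), construct an irreducible inclusion $M \subset \tilde M$ with $M' \cap \tilde M^{\omega} \neq \mathbb{C}1$.

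The plan splits into two stages. The first, as flagged in the abstract, is a purely bimodule-theoretic characterization: $M$ has property (T) if and only if no weakly mixing $M$-$M$-bimodule admits a non-zero almost $M$-central vector. Only the ``if'' direction requires work. From failure of (T) one obtains an $M$-$M$-bimodule $\mathcal K$ with almost central unit vectors $(\eta_n)$ but no non-zero central vector; the task is to manufacture from $\mathcal K$ a bimodule $\mathcal H$ that is in addition weakly mixing (i.e.\ has no non-zero finite Jones-index sub-bimodule) and still carries almost central unit vectors. The natural candidates are either to project $\mathcal K$ onto the orthogonal complement of the closed span of its finite-index sub-bimodules, or to pass to a tensor product such as $\mathcal K \otimes_M \overline{\mathcal K}$, which in the group analogy routinely kills finite-dimensional constituents while preserving approximate invariance. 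Verifying that the almost central sequence survives this modification, and that the resulting bimodule is genuinely weakly mixing, is what I expect to be the main technical obstacle.

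Once such a weakly mixing $\mathcal H$ with almost central unit vectors $(\xi_n)$ is at hand, the second stage applies Shlyakhtenko's $M$-valued semicircular construction (already used in the proof of Theorem \ref{wspectralgapimpliest}), after replacing $\mathcal H$ by $\mathcal H \oplus \overline{\mathcal H}$ if necessary to enforce symmetry. The resulting tracial inclusion $M \subset \tilde M$ has $L^2(\tilde M) \ominus L^2(M)$ isomorphic, as an $M$-$M$-bimodule, to the Fock-type direct sum $\bigoplus_{k \geq 1} \mathcal H^{\otimes_M k}$. Weak mixing of $\mathcal H$ over the factor $M$ is preserved by all tensor powers, so no summand contains a non-zero $M$-central vector, whence $M' \cap \tilde M = \mathbb{C}1$ and the inclusion is irreducible. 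Conversely, the almost central vectors $\xi_n$ realize as bounded almost central elements of $\tilde M$ (for instance the semicircular generators $s(\xi_n)$, which are self-adjoint with operator norm bounded independently of $n$), and their class in $\tilde M^\omega$ witnesses the desired non-triviality of $M' \cap \tilde M^\omega$, completing the contrapositive.
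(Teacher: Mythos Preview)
Your overall architecture matches the paper exactly: reduce to producing a symmetric, (left and right) weakly mixing $M$-$M$-bimodule $\mathcal H$ with almost central vectors, then feed it into Shlyakhtenko's construction so that the Fock space $\bigoplus_{k\ge 1}\mathcal H^{\otimes_M k}$ has no $M$-central vectors while the $s(\xi_n)$ witness non-trivial $M'\cap\tilde M^\omega$. The paper also obtains this via the bimodule analogue of Bekka--Valette (Theorem \ref{nwmixingthm1}); the second stage you describe is essentially verbatim the paper's argument, modulo the caveat that boundedness of $s(\xi_n)$ requires the $\xi_n$ to be \emph{subtracial}, which is arranged via Lemma \ref{subtracialapproximation} before one can assert uniform operator-norm bounds.

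The genuine gap is in your Stage 1. Both candidate constructions you propose for passing from a bimodule $\mathcal K$ with almost central vectors but no central vector to a \emph{weakly mixing} such bimodule fail:
\begin{itemize}
\item The tensor product $\mathcal K\otimes_M\overline{\mathcal K}$ does the \emph{opposite} of what you want. By the very definition recalled in \S\ref{wmixingprelim}, $\mathcal K\otimes_M\overline{\mathcal K}$ contains a non-zero central vector precisely when $\mathcal K$ is \emph{not} left weakly mixing; and if $\mathcal K$ contains a bifinite sub-bimodule $\mathcal L$, then $\mathcal L\otimes_M\overline{\mathcal L}$ is a bifinite sub-bimodule of the tensor product. (The group analogy is the same: $\pi\otimes\bar\pi$ contains $\sigma\otimes\bar\sigma$, finite-dimensional, whenever $\sigma\subset\pi$ is finite-dimensional.) In the paper this tensor product is used only \emph{after} one already has a one-sided weakly mixing bimodule, to upgrade to two-sided weak mixing.
\item Projecting $\mathcal K$ onto the orthogonal complement of its finite-index part does commute with the bimodule actions, so it does not worsen $\|x\xi-\xi x\|$; but it may annihilate the almost central sequence entirely. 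A priori nothing prevents all the almost centrality from living in the non-weakly-mixing summand, and showing this cannot happen is exactly the content of Theorem \ref{nwmixingthm1}.
\end{itemize}
The paper's proof of Theorem \ref{nwmixingthm1} (all of Section \ref{sectionnwx}) is accordingly much more elaborate than either of your suggestions: one first shows that the hypothesis forces $M$ to be non-Gamma and to satisfy a uniform ``non-weak-mixing'' condition on completely positive maps close to the identity, and then uses spectral gap and finite Jones index arguments to extract a genuine central vector from a bifinite sub-bimodule. You have correctly located the hard step, but the methods you sketch do not bridge it.
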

The proof of Theorem \ref{wspectralgap irreducible implies t} is significantly more involved compared to that of Theorem \ref{wspectralgapimpliest}.
Using Shlyakhtenko's $M$-valued semicircular system construction again, 
the proof of Theorem \ref{wspectralgap irreducible implies t} amounts to showing the existence of a symmetric $M$-$M$-bimodule $\mathcal{H}$ such that $\mathcal{H}$ has almost central vectors and $\bigoplus_{n = 1}^{\infty} \mathcal{H}^{\bigotimes_M^n}$ has no non-zero central vectors for a non property (T) II$_1$ factor $M$. 
In the definition of property (T) for groups, 
all unitary representations with almost invariant unit vectors must have a non-zero invariant vector.
A theorem of Bekka and Valette (\cite{BV93}) characterizes property (T) for separable locally compact groups by replacing the existence of non-zero invariant vectors with the existence of non-zero finite dimensional subrepresentations (the failure of the \emph{weak mixing property}, see Section \ref{wmixingprelim}). 
Since for the bimodule $\mathcal{H}$, $\bigoplus_{n = 1}^{\infty} \mathcal{H}^{\bigotimes_M^n}$ having no non-zero central vectors is equivalent to $\mathcal{H}$ being weakly mixing, we show the following II$_1$ factor analogue of Bekka and Valette's result in order to prove Theorem \ref{wspectralgap irreducible implies t}. 
\begin{theorem}\label{nwmixingthm1} A $\mathrm {II}_{1}$ factor $M$ has property (T) if and only if any $M$-$M$-bimodule $\mathcal{H}$ with almost central vectors must contain a $M$-$M$-subbimodule $\mathcal{K}$ which is left or right finite-$M$-dimensional.
\end{theorem}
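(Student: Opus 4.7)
The forward direction follows from the Connes--Jones definition of property (T). If $M$ has property (T) and $\mathcal{H}$ is an $M$-$M$-bimodule with almost central vectors, then $\mathcal{H}$ contains a non-zero central vector $\xi$. Since $a\xi = \xi a$ for all $a \in M$, the cyclic subbimodule $\overline{M\xi}=\overline{\xi M}$ is the image of the left-$M$-linear map $L^2(M)\to\overline{M\xi}$, $\hat a\mapsto a\xi$, so it has left $M$-dimension at most one and is the required finite-$M$-dimensional subbimodule $\mathcal{K}$.

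For the backward direction, the plan is to argue by contraposition: assuming $M$ does not have property (T), construct an $M$-$M$-bimodule admitting almost central vectors but containing no left- or right-finite-$M$-dimensional subbimodule, i.e., a weakly mixing bimodule with almost central vectors. The starting data is an $M$-$M$-bimodule $\mathcal{H}_0$ with almost central unit vectors $(\xi_n)$ and no non-zero central vector. The first step is the canonical decomposition $\mathcal{H}_0=\mathcal{H}_0^{\mathrm{fd}}\oplus\mathcal{H}_0^{\mathrm{wm}}$, where $\mathcal{H}_0^{\mathrm{fd}}$ is the closed subbimodule generated by all left- or right-finite-$M$-dimensional subbimodules, and $\mathcal{H}_0^{\mathrm{wm}}$ is its orthogonal complement; $\mathcal{H}_0^{\mathrm{wm}}$ is weakly mixing by construction. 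Since orthogonal projection onto an $M$-$M$-bimodule-invariant subspace commutes with the bimodule action, the components $\zeta_n$ of $\xi_n$ in $\mathcal{H}_0^{\mathrm{wm}}$ are still almost central, and if $\limsup\|\zeta_n\|>0$ then after normalization we obtain almost central unit vectors in $\mathcal{H}_0^{\mathrm{wm}}$ and are done.

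The main obstacle is the case $\|\zeta_n\|\to 0$, in which the almost central vectors concentrate in the compact part $\mathcal{H}_0^{\mathrm{fd}}$. This case is substantially more delicate than the group-theoretic Bekka--Valette setting: for a II$_1$ factor, even a one-dimensional bimodule---namely $L^2(M)_\alpha$ with right action twisted by an approximately inner but outer automorphism $\alpha$---can admit almost central vectors without any central vector, so the ``compact representations factor through a compact quotient'' averaging argument of Bekka and Valette has no direct analogue. To handle this case, the plan is to pass to the Fock-space bimodule $\mathcal{F}(\mathcal{H}_0)=\bigoplus_{n\ge 1}\mathcal{H}_0^{\otimes_M n}$ arising from Shlyakhtenko's $M$-valued semicircular system construction invoked earlier in the paper; using the identification $\xi b\otimes\eta=\xi\otimes b\eta$ in the Connes relative tensor product, one verifies that $\xi_n^{\otimes k}$ remains almost central in each tensor power, while the absence of a central vector in $\mathcal{H}_0$ imposes strong constraints on the finite-dimensional subbimodules of the Fock space. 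The technical heart of the argument---and the step I expect to be the main difficulty---is leveraging this Fock-space construction to produce a weakly mixing bimodule that still supports almost central vectors, even when the weakly mixing part $\mathcal{H}_0^{\mathrm{wm}}$ of the original bimodule did not.
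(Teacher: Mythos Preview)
Your forward direction is fine: a central vector generates a subbimodule of left (and right) $M$-dimension at most one.

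The backward direction has a genuine gap, precisely at the step you yourself flag as the main difficulty. The Fock-space idea cannot work as stated: $\mathcal{H}_0$ sits inside $\bigoplus_{n\ge 1}\mathcal{H}_0^{\otimes_M n}$ as the $n=1$ summand, so every finite-$M$-dimensional subbimodule of $\mathcal{H}_0$ is already a finite-$M$-dimensional subbimodule of the Fock bimodule. More generally, tensor powers of bifinite bimodules remain bifinite, so if $\mathcal{H}_0$ is a direct sum of bifinite pieces, so is each $\mathcal{H}_0^{\otimes_M n}$. Your own illustrative example makes this concrete: for $\mathcal{H}_0=L^2(M)_\alpha$ with $\alpha$ approximately inner and outer, the Fock bimodule is $\bigoplus_{n\ge 1}L^2(M)_{\alpha^n}$, which is still a direct sum of one-dimensional bimodules. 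So the Fock construction never produces a weakly mixing bimodule out of a non-weakly-mixing one; it is used in the paper only to pass from bimodules to inclusions, not to manufacture weak mixing.

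The paper's proof goes in a completely different direction and is considerably more elaborate. Rather than trying to build a single weakly mixing bimodule, it first strengthens the hypothesis to: every bimodule with an $(F,\varepsilon)$-almost central unit vector has \emph{both} a left-finite and a right-finite subbimodule (via the Connes tensor $\mathcal{H}\otimes_M\overline{\mathcal{H}}$). From this it extracts a \emph{uniform non-weakly-mixing} property for subunital subtracial completely positive maps close to the identity: there exist $F,G,\varepsilon,\delta$ such that $\max_{x\in F}\|\Phi(x)-x\|_2\le\varepsilon$ forces $\sum_{a,b\in G}\|\Phi(aub)\|_2\ge\delta$ for all unitaries $u$. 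The proof of this step is a recursive composition $\psi=\lim_n\Phi_1\circ\cdots\circ\Phi_n$, where each $\Phi_n$ is chosen to kill a new unitary $u_n$; the limit $\psi$ gives a left weakly mixing pointed bimodule with an almost central vector, contradicting the strengthened hypothesis. This uniform condition is then used, via a convexity/averaging argument, to show $M$ has no property Gamma. Finally, the problem is reduced to irreducible bifinite bimodules ${}_M(\bigoplus^n L^2(M))p_{\theta(M)}$ with $\theta(M)'\cap p\mathbb{M}_n(M)p=\mathbb{C}p$, and one shows these must contain a central vector by analyzing the $\theta_i$ up to unitary equivalence, using spectral gap (from non-Gamma), ultrapower relative commutants, and extracting a convergent sequence of partial isometries implementing $\theta$. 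None of this machinery---the uniform CP-map condition, the non-Gamma step, or the bifinite/irreducible analysis---appears in your outline, and these are exactly the ingredients that handle the ``almost central vectors live in the compact part'' case that your Fock-space plan cannot address.
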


{\it Organization of the paper.} We start by reviewing some basics of II$_1$ factors and bimodules in Section \ref{prelim}. In Section \ref{gap}, we prove the characterizations of property (T) by spectral gap (Theorems \ref{wspectralgapimpliest} and \ref{wspectralgap irreducible implies t}) modulo the II$_1$ factor analogue of Bekka and Valette's result (Theorem \ref{nwmixingthm1}), which is treated in Section \ref{sectionnwx}. \\

{\it Acknowledgement.} I would like to thank my advisor Adrian Ioana for suggesting this problem to me. I am very grateful to him for the many helpful discussions throughout the project, and for his constant guidance and support. I would also like to thank Stefaan Vaes, Jesse Peterson and Isaac Goldbring for their insightful remarks.

\section{Preliminaries}\label{prelim}
We introduce the notations and some basic facts that we will need. 

A tracial von Neumann algebra $M$ is a II$_1$ factor if it is infinite dimensional with trivial center. 
Throughout this paper we will consider separable II$_1$ factors.
A II$_1$ factor comes with a unique canonical trace, which we denote by $\tau$.
We write $L^2(M,\tau)$ for Hilbert space from the GNS construction associated with $\tau$, and $\hat{x}$ for the image of $x\in M$ in $L^2(M)$. For any $\xi$, $\eta \in L^2(M, \tau)_+$, we have the \emph{Powers-Størmer inequality} (\cite{PS70}):
$||\xi - \eta ||_2^2 \le ||\xi^2 -\eta^2 ||_1 \le 
||\xi - \eta ||_2||\xi +\eta ||_2$.
Denote by $\rm{Tr}$ the usual trace on $\mathcal{B}(\ell^2(\mathbb{N}))$.

Fix a non-principle ultrafilter $\omega$ on $\mathbb{N}$, the \emph{ultrapower} of a tracial von Neumann algebra $(M, \tau)$ is $\Pi_{\omega} M = \Pi_{n \ge 1} M / I_{\omega}$, 
where $\Pi_{n \ge 1} M$ is the algebra of bounded sequences from $M$ endowed with the norm $||(x_n)_n|| = \sup_n ||x_n||$, 
and $I_{\omega} = \left\{ \left(x_n\right)_n \in \Pi_{n \ge 1} M| \lim_{\omega}\tau \left( x_n^*x_n\right) =  0 \right\}$.
For a II$_1$ factor $M$,  $\Pi_{\omega} M$ is a  II$_1$ factor with the canonical faithful trace $\tau_{\omega} \left( \left(x_n\right)_n \right) = \lim_{\omega}\tau(x_n) $.

For two tracial von Neumann algebras $(M, \tau_M)$ and $(N, \tau_N)$, a completely positive map $\phi: M \to N $ is \emph{subunital} if $\phi(1) \le 1$,
and \emph{subtracial} if $\tau_N \circ \phi \le \tau_M$. 
Notice that by Stinespring's dilation theorem, a completely positive subunital subtracial map is necessarily $||\cdot||_2$-contractive.

For a tracial von Neumann algebra $(M, \tau_M)$, a \emph{left $M$-module} is a Hilbert space $\mathcal{H}$ equipped with a normal unital *-homomorphism $\pi_l : M \to \mathcal{B}(\mathcal{H})$. 
Any separable left $M$-module is isomorphic to $\left( \ell^2(\mathbb{N}) \otimes L^2(M) \right)p$ for some projection $p \in B(\ell^2(\mathbb{N})) \overline{\otimes}M$. 
The \emph{dimension} of $\mathcal{H}$ as a left $M$-module $\dim ({}_M \mathcal{H})$ is the number $(\Tr \otimes \tau) p \in [0, +\infty]$. 
The left $M$-module is \emph{finite} if it has finite dimension as a left $M$-module, in which case $p$ can be taken in $\mathbb{M}_n(\mathbb{C}) \overline{\otimes}M$ for some $n \in \mathbb{N}$.
Similarly we have the notions of \emph{right $M$-modules}, dimension and finiteness of right $M$-modules. 
Recall that the \emph{ Jones' index} of a subfactor $B$ of a separable II$_1$ factor $M$ is $[M:B] = \dim \left(L^2(M)_B\right)$.

For two tracial von Neumann algebras $(M, \tau_M)$ and $(N, \tau_N)$, a \emph{$M$-$N$-bimodule} is a Hilbert space $\mathcal{H}$ equipped with a normal unital *-homomorphism $\pi_l : M \to \mathcal{B}(\mathcal{H})$ and a normal unital anti-homomorphism $\pi_r : N \to \mathcal{B}(\mathcal{H})$,
such that the images of $\pi_l $ and $\pi_r$ commute. 
Let 
\begin{align*}
    \mathcal{H}^0 = \big\{\xi \in \mathcal{H} |  \textrm{ there exists } c  \ge 0\textrm{ such that } ||\xi b|| \le c ||b||_2 \textrm{ for all } b \in N \big\}
\end{align*} be the set of \emph{left $N$-bounded vectors}. 
For $\xi, \eta \in \mathcal{H}^0$ we denote $L_{\xi}$ the bounded operator from $L^2(N)$ to $\mathcal{H}$ which extends $\hat{b} \mapsto \xi b$ for $\hat{b} \in L^2(N)$, 
and $\langle \xi, \eta  \rangle_N = L^*_{\xi} L_{\eta} \in N$. 
Similarly, let ${}^0\mathcal{H}$ be the set of \emph{right $M$-bounded vectors},
$R_{\xi}$ the bounded operator from $L^2(M)$ to $\mathcal{H}$ extending $\hat{a} \mapsto a \xi $ for $\hat{a} \in L^2(M)$,
and ${}_M\langle \xi, \eta  \rangle = JR^*_{\xi} R_{\eta}J$ 
where $J$ is the conjugation operator in $L^2(M)$,
for $\xi, \eta \in {}^0\mathcal{H}$.
A vector $\xi$ in $\mathcal{H}$ is \emph{subtracial} if $\langle\xi, x\xi \rangle \le \tau_M(x)$ for any positive $x \in M$ and $\langle\xi, \xi y \rangle \le \tau_N(y)$ for any positive $y \in N$. 
A vector $\xi$ in $\mathcal{H}$ is \emph{tracial} if equalities hold above (See \cite{ Po01} and \cite{AP18} chapters 8 and 13).

A bifinite $M$-$N$-bimodule is a $M$-$N$-bimodule that is both a finite left $M$-module and a finite right $N$-module. 
For II$_1$ factors $M$ and $N$, a $M$-$N$-bimodule is bifinite if and only if it is isomorphic to $(\ell^2(n) \otimes L^2(M))p$ for some $n$ and some projection $p \in \mathbb{M}_n(\mathbb{C}) \overline{\otimes}M$  such that $\mathrm{Tr} \otimes \tau_M (p) < \infty $, 
where $M$ acts on the left in the usual way and $N$ on the right via a normal unital *-homomorphism $\pi: N \to p \mathbb{M}_n(M) p$ with Jones' index $\left[p \mathbb{M}_n(M) p : \pi(N)\right] < \infty$.

To each normal completely positive map $\phi: M \to N $ corresponds a $M$-$N$-bimodule $\mathcal{H}_{\phi}$, 
defined as the completion of the quotient of the algebraic tensor product $M \odot N$ by the null space of the sesquilinear form $\langle x_1 \otimes y_1, x_2 \otimes y_2 \rangle_{\phi} = \tau_N(\phi(x_1^* x_2) y_2 y_1^*)$.

For a $M$-$N$-bimodule $\mathcal{H}$, the \emph{contragredient bimodule} is the complex conjugate $\overline{\mathcal{H}}$ of $\mathcal{H}$ 
with the $N$-$M$-bimodule structure given by $a \cdot \overline{\xi} \cdot b = \overline{ b^*\xi a^*}$ for $a \in N$, $\xi \in \mathcal{H}$, and $b \in M$.
An $M$-$M$-bimodule $\mathcal{H}$ is \emph{symmetric} if there is an anti-unitary operator $J$ on $\mathcal{H}$ with $J^2 = 1$ and $J( a\xi b) = b^* J(\xi) a^*$ for every $a,b \in M$ and $\xi \in \mathcal{H}$. 

For a $M$-$N$-bimodule $\mathcal{H}$ and a  $N$-$P$-bimodule $\mathcal{K}$, where $M$, $N$ and $P$ are tracial von Neumann algebras, 
the \emph{Connes tensor product} $\mathcal{H} \otimes_N \mathcal{K}$ is the completion of the quotient of the algebraic tensor product $\mathcal{H}^0 \odot {}^0\mathcal{K}$ by the null space of the sesquilinear form $ \left\langle \xi_1 \otimes \eta_1, \xi_2 \otimes \eta_2 \right\rangle  = \big\langle \eta_1 ,  \langle\xi_1,\xi_2\rangle_M \eta_2 \big\rangle_{\mathcal{K}} $.
The Connes tensor product $\mathcal{H} \otimes \mathcal{K}$ is equipped with the $M$-$P$-bimodule structure $a \cdot  \xi \otimes \eta \cdot b = a\xi \otimes \eta b$.

\subsection{Shlyakhtenko's \texorpdfstring{$M$}{M}-valued semicircular system}\label{shlyakhtenko construction}
Next we recall Shlyakhtenko's \emph{$M$-valued semicircular system} for a tracial von Neumann $M$ and a symmetric $M$-$M$-bimodule $\mathcal{H}$ (\cite{Sh97}, see also \cite{KV15}). 
The \emph{full Fock space} of $\mathcal{H}$ is defined as
\begin{align*}
\mathcal{F}_M(\mathcal{H}) = L^2(M) \oplus \bigoplus_{n = 1}^{\infty} \mathcal{H}^{\bigotimes_M^n},\end{align*} where $\mathcal{H}^{\bigotimes_M^n}$ is the $n$-fold Connes tensor product of the $M$-$M$-bimodule $\mathcal{H}$.
$M$ acts on $\mathcal{F}_M(\mathcal{H}) $ naturally via the left and right actions on $L^2(M)$ and $\mathcal{H}^{\bigotimes_M^n}$.
 For $\xi \in \mathcal{H}^0$, define the bounded operator  $l(\xi)$ on $\mathcal{F}_M(\mathcal{H})$ by
\begin{align*}
    &l(\xi)(x) = \xi x \text{ for } x \in L^2(M),\\
    &l(\xi)(\xi_1 \otimes_M \dots\otimes_M \xi_n) = \xi\otimes_M \xi_1 \otimes_M \dots\otimes_M \xi_n.
\end{align*} 
Notice 
\begin{align*}
    &l(\xi)^*(x) = 0 \text{ for } x \in L^2(M),\\
    &l(\xi)^*(\xi_1 \otimes_M \dots\otimes_M \xi_n) = L_{\xi}^*L_{\xi_1} \xi_2 \otimes_M \dots\otimes_M \xi_n.
\end{align*} 
Denote $s(\xi) = l(\xi) + l(J\xi)^*$. 
Then $\left\{s(\xi) |\xi \in \mathcal{H}^0,  \xi = J(\xi) \right\}$ is the $M$-valued semicircular system. 
The von Neumann algebra $\Tilde{M} = M \vee \left\{ s(\xi) | \xi \in \mathcal{H}^0,  \xi = J(\xi) \right\}'' = M \vee \left\{ s(\xi) | \xi \in \mathcal{H}^0 \right\}''$ has a faithful trace $\tau(x) = \big\langle x1_M,1_M \big\rangle_{\mathcal{F}_M(\mathcal{H})}$. 
The \emph{vacuum vector}, $1_M \in {\mathcal{F}}_M(\mathcal{H})$, is cyclic and separating for $\Tilde{M}$, and $\mathcal{F}_M(\mathcal{H}) \simeq L^2(\Tilde{M})$ as $M$-$M$-bimodules.

\subsection{Property (T)}
There are many equivalent definitions for property (T) (see for example \cite{BV93} and \cite{AP18}). We record the following parallel definitions for groups and von Neumann algebras. 

Let $H$ be a subgroup of a discrete group $G$. The pair ($G$, $H$) has \emph{relative property (T)} (\cite{Ka67}, \cite{Ma82}), 
if there exists a pair $(Q, \varepsilon)$, 
where  $Q \subset G$ is finite and $\varepsilon > 0$, 
such that for any unitary representation $\pi$ of $G$ with a  $(Q, \varepsilon)$-invariant unit vector $\xi$,
i.e., $\max_{g \in Q}||\pi(g) \xi - \xi|| \le \varepsilon$, 
$\pi$ has a non-zero $H$-invariant vector. 
$G$ has  \emph{(Kazhdan's) property (T)} if ($G$, $G$) has relative property (T).

Let $(M, \tau)$ be a tracial von Neumann algebra and $A \subset M$ a von Neumann subalgebra.
Let $F \subset M$ be a finite subset and $\varepsilon >0$, a vector $\xi$ in a $M$-$M$-bimodule is $(F, \varepsilon)$-almost central if $\max_{x \in F}||x \xi - \xi x || \le \varepsilon$.
One says the inclusion $A \subset M$, or the pair $(M,A)$ has \emph{relative property (T)} (\cite{Po01}, see also \cite{AP18} \S14.2), 
if for every $\varepsilon' >0$, there exists a finite subset $F \subset M$ (sometimes referred to as the \emph{critical set}) and $\varepsilon > 0$, such that for any $M$-$M$-bimodule $\mathcal{H}$ with a tracial, 
$(F, \varepsilon)$-almost central unit vector $\xi$, 
$\mathcal{H}$ has a $A$-central vector $\eta$ such that $||\eta - \xi|| \le \varepsilon'$.  
$M$ has property (T) if ($M$, $M$) has relative property (T).

\subsubsection{Property (T) for II$_1$ factors}
The notion of property (T) for II$_1$ factors was defined by Connes in \cite{Co80} and developed by Connes and Jones in \cite{CJ83}. 
When $M$ is a II$_1$ factor, the tracial assumption of the almost central vector and the requirement that the central vector is close to the almost central vector can be relaxed.
For a II$_1$ factor $M$,  the inclusion of a von Neumann subalgebra $A$ into $M$ having relative property (T) is equivalent to the following (see \cite{AP18} \S14.5):
\begin{enumerate}\label{rel(T) for II1}
    \item for every $\varepsilon > 0$, there exists a finite subset $F \subset M$ and $\delta > 0$, such that for any $M$-$M$-bimodule $\mathcal{H}$ with a $(F, \delta)$-almost central unit vector $\xi$, $\mathcal{H}$ has  a non-zero $A$-central vector $\eta$ with $|| \eta - \xi|| \le \varepsilon$;
\end{enumerate}
$M$ having property (T) is equivalent to the following:
\begin{enumerate}\label{(T) for II1}
    \item[(2)] there exists a finite subset $F \subset M$ and $\varepsilon > 0$, such that for any $M$-$M$-bimodule $\mathcal{H}$ with a $(F, \varepsilon)$-almost central unit vector, $\mathcal{H}$ has  a non-zero $M$-central vector.
\end{enumerate}

Whenever there exists a net of almost central vectors, we can use the following lemmas to assume subtraciality of these vectors if needed.
\begin{lemma}\label{subtraciallemma}(Lemma 13.1.11 in \cite{AP18})
Let $\mathcal{H}$ be a $M$-$N$-bimodule and $\xi \in \mathcal{H}$. Let $T_0 \in L^1(M, \tau_M)_+$ such that $\langle \xi, x \xi \rangle = \tau_M(x T_0) $ for every $x \in M$. Let $S_0 \in L^1(N, \tau_N)_+$ such that $\langle \xi,  \xi y \rangle = \tau_N(y S_0) $ for every $y \in N$. Then there exists a subtracial vector $\xi' \in \mathcal{H}$, such that $|| \xi -\xi' ||^2 \le 2|| T_0 -1 ||_1 + 2 || S_0 - 1 ||_1$.
\end{lemma}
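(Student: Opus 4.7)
The plan is to obtain $\xi'$ by symmetrically cutting down $\xi$ with positive contractions built from $T_0$ and $S_0$. Using Borel functional calculus for the (possibly unbounded) positive self-adjoint operators $T_0$ and $S_0$ affiliated with $M$ and $N$, define
\begin{align*}
a = \min(1, T_0^{-1/2}) \in M, \qquad b = \min(1, S_0^{-1/2}) \in N,
\end{align*}
where the expression is interpreted as $1$ at $0$; these are positive elements of norm at most $1$ commuting with $T_0$ and $S_0$ respectively. Set $\xi' = a \xi b$.

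First I would establish a monotonicity principle: for any $\eta \in \mathcal{H}$ and any contraction $c \in N$, the left associated operator satisfies $T_{\eta c} \le T_\eta$ in $L^1(M)_+$ (and symmetrically for the right associated operator under left multiplication by a contraction from $M$). This is immediate from the commutation of left and right actions: for $x \in M_+$,
\begin{align*}
\langle \eta c, x \eta c\rangle = \langle \eta, \pi_r(c)^* \pi_l(x) \pi_r(c) \eta\rangle = \langle \eta, x \, \pi_r(c)^* \pi_r(c) \, \eta\rangle \le \langle \eta, x \eta\rangle,
\end{align*}
since $\pi_r(c)^* \pi_r(c)$ is a positive contraction commuting with $x \ge 0$. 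Applied to $\xi' = (a\xi) b$, this yields $T_{\xi'} \le T_{a\xi}$, and a direct computation, using that $a$ commutes with $T_0$, gives $T_{a\xi} = a^2 T_0 = \min(T_0, 1) \le 1$. Symmetrically $S_{\xi'} \le S_{\xi b} = b^2 S_0 \le 1$. Hence $\xi'$ is subtracial.

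For the norm estimate I would split via $\|\xi - \xi'\|^2 \le 2 \|\xi - a \xi\|^2 + 2 \|a \xi - a \xi b\|^2$. The first summand equals $\tau_M((1 - a)^2 T_0)$, and by the scalar inequality $(1 - \min(1, t^{-1/2}))^2 \, t \le (t - 1)_+ \le |t - 1|$ (zero on $[0,1]$; equal to $(\sqrt{t} - 1)^2 \le t - 1$ on $(1, \infty)$), this is bounded by $\|T_0 - 1\|_1$. For the second, using $\|a\| \le 1$, $\|a \xi - a \xi b\|^2 \le \|\xi (1 - b)\|^2 = \tau_N((1 - b)^2 S_0) \le \|S_0 - 1\|_1$ by the same scalar bound. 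Combining gives the required inequality.

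The only real subtlety is checking that cutting on the right does not destroy the subtraciality achieved by cutting on the left (or vice versa); the monotonicity principle above settles this cleanly, since $\|a\|, \|b\| \le 1$. The remaining ingredients are standard Borel functional calculus for positive self-adjoint operators affiliated with a tracial von Neumann algebra, plus the elementary scalar inequality $(\sqrt{t} - 1)^2 \le t - 1$ for $t \ge 1$.
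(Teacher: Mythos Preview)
Your proof is correct. The paper does not prove this lemma itself: it simply quotes it as Lemma 13.1.11 of \cite{AP18} and moves on. Your argument---cutting $\xi$ symmetrically by the functional-calculus contractions $a=\min(1,T_0^{-1/2})$ and $b=\min(1,S_0^{-1/2})$, then using the monotonicity $T_{\eta c}\le T_\eta$ for contractions $c$ together with the scalar bound $(\sqrt t-1)^2\le t-1$ for $t\ge 1$---is exactly the standard proof from \cite{AP18}, so there is nothing to compare beyond noting that you have reproduced it faithfully.
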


\begin{lemma}\label{subtracialapproximation} (See also Lemma 13.3.11 in \cite{AP18})
Suppose $\mathcal{H}$ is a $M$-$M$-bimodule with \emph{almost central} unit vectors $(\xi_n)$, that is, $\lim ||x \xi_n -\xi_n x || = 0$ for every $x \in M$.
Then $\mathcal{H}^{\oplus \infty}$ has almost central, subtracial vectors $(\eta_n)$ that are \emph{almost unit}, that is, $\lim ||\eta_n|| =1$.  
\end{lemma}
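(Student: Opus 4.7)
The plan is to use Lemma~\ref{subtraciallemma} at the end, reducing the problem to constructing unit vectors $\eta_n' \in \mathcal{H}^{\oplus \infty}$ that are asymptotically central and whose left and right Radon--Nikodym derivatives $T_n, S_n \in L^1(M)_+$ --- defined by $\langle \eta_n', x \eta_n' \rangle = \tau(x T_n)$ and $\langle \eta_n', \eta_n' x \rangle = \tau(x S_n)$ for $x \in M$ --- satisfy $\| T_n - 1 \|_1 + \| S_n - 1 \|_1 \to 0$. Lemma~\ref{subtraciallemma} then yields subtracial vectors $\eta_n$ with $\| \eta_n - \eta_n' \|^2 \le 2(\| T_n - 1 \|_1 + \| S_n - 1 \|_1) \to 0$; consequently $\| \eta_n \| \to 1$, and asymptotic centrality propagates from $\eta_n'$ to $\eta_n$ via $\|x\eta_n - \eta_n x\| \le \|x\eta_n' - \eta_n'x\| + 2\|x\|\|\eta_n - \eta_n'\|$.

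To construct $\eta_n'$, I would average by unitary conjugation inside $\mathcal{H}^{\oplus \infty}$. Given unitaries $u_{n,1}, \dots, u_{n,N_n} \in \mathcal{U}(M)$, set
\[
\eta_n' \;=\; \frac{1}{\sqrt{N_n}} \sum_{i=1}^{N_n} (u_{n,i}\, \xi_n\, u_{n,i}^*) \otimes e_i \;\in\; \mathcal{H}^{\oplus N_n} \subset \mathcal{H}^{\oplus \infty}.
\]
The identity $\| u \zeta u^* \| = \| \zeta \|$ for any bimodule vector $\zeta$ gives $\| \eta_n' \| = 1$, and a direct calculation with the bimodule structure yields
\[
T_{\eta_n'} \;=\; \frac{1}{N_n} \sum_{i=1}^{N_n} u_{n,i}\, T_{\xi_n}\, u_{n,i}^*, \qquad S_{\eta_n'} \;=\; \frac{1}{N_n} \sum_{i=1}^{N_n} u_{n,i}\, S_{\xi_n}\, u_{n,i}^*.
\]
Since $\tau(T_{\xi_n}) = \tau(S_{\xi_n}) = 1$, Dixmier's averaging theorem --- applied jointly to the pair $(T_{\xi_n}, S_{\xi_n})$ in $L^1(M)$, after spectral truncation into bounded elements of $M$ --- gives choices of $N_n$ and $u_{n,i}$ so that both averages lie within $1/n$ of the scalar $1$ in $L^1$-norm.

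For asymptotic centrality of $\eta_n'$, the parallel computation yields
\[
\| x \eta_n' - \eta_n' x \|^2 \;=\; \frac{1}{N_n} \sum_{i=1}^{N_n} \| [x, u_{n,i} \xi_n u_{n,i}^*] \|^2,
\]
and the bound $\|[x, u\xi u^*]\| \le \|[x,\xi]\| + 2\|x\| \cdot \|u\xi u^* - \xi\|$, together with $\|u\xi u^* - \xi\| = \|[u, \xi]\|$, reduces the question to arranging $\max_{i \le N_n} \|[u_{n,i}, \xi_n]\| \to 0$ as $n \to \infty$. This I would achieve by a diagonal extraction: for each $n$, the Dixmier unitaries $u_{n,i}$ form a finite set, and almost centrality of $(\xi_n)$ against each fixed element of $\mathcal{U}(M)$ enables passing to a subsequence of $(\xi_n)$ on which these commutators uniformly shrink.

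The principal obstacle is this last coordination: the Dixmier averaging unitaries depend on $T_{\xi_n}, S_{\xi_n}$ and cannot be fixed in advance of $\xi_n$, yet one needs $\xi_n$ to be asymptotically central with respect to them. Paralleling the approach of Lemma~13.3.11 in \cite{AP18}, this can be resolved either by restricting the averaging to a predetermined countable $\|\cdot\|_2$-dense subgroup of $\mathcal{U}(M)$ --- which suffices for Dixmier of $L^1$ elements after spectral truncation, via $L^2$-continuity of conjugation --- or by interleaving the choice of $n$ with the Dixmier selection so that the commutator bounds from almost centrality are eventually satisfied.
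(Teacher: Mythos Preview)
Your overall plan---build unit vectors $\eta'_n\in\mathcal{H}^{\oplus\infty}$ whose left and right Radon--Nikodym derivatives are $L^1$-close to $1$, then invoke Lemma~\ref{subtraciallemma}---matches the paper's, but the averaging mechanism you choose is different, and the coordination problem you flag is a genuine gap that neither of your proposed resolutions actually closes. Restricting to a countable $\|\cdot\|_2$-dense subgroup $\Gamma\subset\mathcal{U}(M)$ fixes the \emph{pool} of unitaries but not the specific finite family: the Dixmier averaging of $T_{\xi_n}$ to within $1/n$ of $1$ still requires a finite set $\{u_{n,i}\}\subset\Gamma$ that depends on $T_{\xi_n}$, hence on $n$, and you cannot then retroactively arrange $\max_i\|[u_{n,i},\xi_n]\|$ to be small without changing $n$ (and thus $T_{\xi_n}$, invalidating the Dixmier choice). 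The ``interleaving'' suggestion runs into the same circularity: whichever of $\xi_n$ or $\{u_{n,i}\}$ you fix first, the other constraint becomes unverifiable for that same $n$.

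The paper avoids this entirely by averaging over the \emph{index} $n$ rather than over unitary conjugates of a single $\xi_n$. The observation you are not using is that almost centrality together with uniqueness of the trace on the $\mathrm{II}_1$ factor $M$ forces $\omega_{\xi_n}^l,\omega_{\xi_n}^r\to\tau$ weakly in $M_*$ (any weak cluster point is a tracial state, hence equals $\tau$). Mazur's theorem then upgrades weak to norm on convex hulls: there are convex combinations $\sum_i\lambda_{n,i}\bigl(\omega^l_{\xi_{n_i}},\omega^r_{\xi_{n_i}}\bigr)\to(\tau,\tau)$ in $M_*\times M_*$, and one sets $\xi'_n=\bigl(\lambda_{n,1}^{1/2}\xi_{n_1},\dots,\lambda_{n,k_n}^{1/2}\xi_{n_{k_n}}\bigr)\in\mathcal{H}^{\oplus\infty}$, so that $\omega^l_{\xi'_n}=\sum_i\lambda_{n,i}\,\omega^l_{\xi_{n_i}}$ and likewise on the right. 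Since the indices $n_i$ can be chosen arbitrarily large, almost centrality of $\xi'_n$ on a countable dense set is inherited for free from the tail of $(\xi_n)$---there is nothing to coordinate.
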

 \begin{proof}
 Let $\omega_{\xi_n}^l (x) = \langle \xi_n, x \xi_n \rangle$ 
and $\omega_{\xi_n}^r (x) =  \langle \xi_n,   \xi_n x \rangle$ for $x \in M$. 
Using the almost centrality and the uniqueness of
the trace on $M$, any weak limits $\lim \omega_{\xi_n}^l =\lim \omega_{\xi_n}^r = \tau_M$. 
Consider the weak closure of the convex hull of 
$ (\tau_M-\omega_{\xi_n}^l,\tau_M-\omega_{\xi_n}^r  )_n$ in $M_*^2$, which is the same as its norm closure and contains $(0,0)$. 
Take a sequence $(\varepsilon_n) \subset  \mathbb{R}_{>0}$ with $\lim \varepsilon_n =0$, and a countable $||\cdot||_2$-dense subset $\{x_1, x_2, \cdots\}$ of $(M)_1$.
Then for any  $n$, there exists a convex combination $|| \sum_{i=1}^{k_n} \lambda_{n,i} (\tau_M - \omega_{\xi_{n_i}}^l, \tau_M - \omega_{\xi_{n_i}}^r)|| < \varepsilon_n$ 
where $\sum_{i=1}^{k_n} \lambda_{n,i} =1$,
with $\max_{1 \le j \le n} ||x_j \xi_{n_i} - \xi_{n_i} x_j|| < \varepsilon_n$ for all $n_i$.
It is possible to choose such $n_i$'s since $(0,0)$ is contained in the norm closure of $ (\tau_M-\omega_{\xi_n}^l,\tau_M-\omega_{\xi_n}^r  )_{n \ge n_0}$ for arbitrarily large $n_0$.
Let $\xi'_n = (\lambda_{n,1}^{1/2} \xi_{n_1},..., \lambda_{n,k_n}^{1/2} \xi_{n_{k_n}}) \in \mathcal{H}^{\oplus \infty}$.
Then $\xi'_n$ is a unit vector with 
$\max_{1 \le j \le n} ||x_j \xi'_n - \xi'_n x_j|| < \varepsilon_n$,
$||\tau_M - \omega_{\xi'_n}^l|| < \varepsilon_n$ and $  ||\tau_M - \omega_{\xi'_n}^r|| < \varepsilon_n$.
By Lemma \ref{subtraciallemma}, 
there exists a subtracial vector $\eta_n \in \mathcal{H}^{\oplus \infty}$ 
such that $||\eta_n -\xi'_n|| \le 2 \sqrt{\varepsilon_n}$,
and $\max_{1 \le j \le n}  ||x_j \eta_n - \eta_n x_j|| < 4\sqrt{\varepsilon_n} + \varepsilon_n$.
For $x \in (M)_1$, by the subtraciality of $\eta_n$ we have $||x \eta_n - \eta_n x|| \le 2||x - x_j||_2 + ||x_j \eta_n - \eta_n x_j ||$ for all $x_j$.
Therefore the sequence of vectors $(\eta_n) \subset \mathcal{H}^{\oplus \infty}$ is almost unit and almost central.
 \end{proof}

\subsection{Spectral Gap and Weak Spectral Gap}\label{section spectral gap}
The following notions of spectral gap and weak spectral gap for inclusion of von Neumann algebras are formulated by Popa in \cite{Po09}. 
Let $M$ be a II$_1$ factor and $A \subset M$ a von Neumann subalgebra. 
Then $A \subset M$ has \emph{spectral gap} if for every net of unit vectors $(\xi_i) \in L^2(M)$ with $\lim_i ||x \xi_i - \xi_i x ||_2 = 0$ for every $x \in A$, $\lim_i || \xi_i - E_{A' \cap M} (\xi_i) ||_2 = 0$. 
This is equivalent to $A' \cap L^2(M)^{\omega} = L^2(A' \cap M)^{\omega} $. 
The inclusion $A \subset M$ has \emph{weak spectral gap} if for every bounded net $(\xi_i) \in (M)_1$ with $||\xi_i ||_2 = 1$ for every $i$ and $\lim_i ||x \xi_i - \xi_i x ||_2 = 0$  for every $x \in A$, $\lim_i || \xi_i - E_{A' \cap M} (\xi_i) ||_2 = 0$, 
that is, $A' \cap M^{\omega} = (A' \cap M)^{\omega} $. Obviously spectral gap implies weak spectral gap. For an example of inclusion with weak spectral gap but not spectral gap, see remarks in \S2.2 of \cite{Po09}.

Suppose $A$ is a property (T) II$_1$ factor. For any inclusion of $A$ into a tracial von Neumann algebra $M$, if $(\xi_i)$ is a net of unit vectors in $L^2(M)$ such that $\lim_i ||x \xi_i - \xi_i x ||_2 = 0$ for every $x \in A$, then $\xi_i$'s are almost central unit vectors in the $A$-$A$-bimodule $L^2(M)$. 
By property (T) (definition in Section \ref{rel(T) for II1}) there are $A$-central vectors $\eta_i \in L^2(A' \cap M)$ such that $\lim_i||\eta_i - \xi_i  ||_2 \to 0$.  
Therefore if $A$ is a property (T) II$_1$ factor then any inclusion of $A$ into a tracial von Neumann algebra $M$ has spectral gap.\\

The remaining two subsections are not needed for the proof of Theorem \ref{wspectralgapimpliest}. They will be used in the proof of Theorem \ref{wspectralgap irreducible implies t} and Theorem \ref{nwmixingthm1}.

\subsection{Weak Mixing}\label{wmixingprelim}
We list the definitions of weak mixing for a unitary representation of a group and a bimodule of a tracial von Neumann algebra respectively.

For a unitary representation $\pi$ of a group $G$ on a Hilbert space, $\pi$ is \emph{weakly mixing} if it has no non-zero finite dimensional subrepresentations.

For a $M$-$N$-bimodule $\mathcal{H}$ of von Neumann algebras $M$ and $N$, the notion of {\it(left) weakly mixing} is introduced by Peterson and Sinclair in \cite{PS09}.We have the following equivalent definitions (see \cite{Bo14}):
\begin{enumerate}
    \item  the $M$-$M$-bimodule $\mathcal{H} \otimes_N \overline{\mathcal{H}}$ contains no non-zero central vector;
    \item there exists a sequence of unitaries $(u_n) \subset \mathcal{U}(M)$ such that $\lim_n \sup_{b \in (N)_1} |\langle u_n \xi b , \eta  \rangle| = 0$ for any $\xi$ and $\eta$ in $\mathcal{H}$;
    \item $\mathcal{H}$ has no non-zero $M$-$N$-subbimodule which is $N$-finite dimensional.
\end{enumerate}
Notice if $\mathcal{H}$ is left weakly mixing then for any $N$-$N$-bimodule $\mathcal{K}$, the same sequence of unitaries as in (2) witnesses that $\mathcal{H}\otimes_N \mathcal{K}$ is also left weakly mixing (see section \ref{gap}).

For separable locally compact groups we have the following characterization of property (T) due to Bekka and Valette (Theorem 1 in \cite{BV93}).
\begin{theorem}\label{BV}
Let $G$ be a separable locally compact group. Then the following are equivalent:
\begin{enumerate}
    \item $G$ has property (T);
    \item any unitary representation $\pi$ of $G$ on a Hilbert space which almost has invariant vectors has a non-zero finite dimensional subrepresentation. 
\end{enumerate}
\end{theorem}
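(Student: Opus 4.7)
The direction $(1) \Rightarrow (2)$ is immediate from the definition of property (T): if $\pi$ almost has invariant vectors and $G$ has property (T), then $\pi$ contains a non-zero invariant vector, whose span is a one-dimensional (hence finite-dimensional) subrepresentation.

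For the converse $(2) \Rightarrow (1)$, I plan to argue by contrapositive: assuming $G$ does not have property (T), I would construct a unitary representation that almost has invariant vectors but admits no non-zero finite-dimensional subrepresentation. Failure of property (T) yields a representation $(\pi, H)$ with almost invariant unit vectors $(\xi_n)$ and no non-zero invariant vector. Let $H_f \subseteq H$ be the closed linear span of all finite-dimensional $G$-invariant subspaces and set $H_\infty = H_f^{\perp}$; by construction $\pi|_{H_\infty}$ has no non-zero finite-dimensional subrepresentation. Decomposing $\xi_n = \xi_n^f + \xi_n^\infty$ and applying the Pythagorean theorem, at least one of $\pi|_{H_f}$ or $\pi|_{H_\infty}$ almost has invariant vectors; the favorable case is when $\pi|_{H_\infty}$ does, which yields the desired representation at once.

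The main obstacle is the remaining case, where the almost invariance is concentrated in $\pi|_{H_f}$---a direct sum of finite-dimensional subrepresentations admitting almost invariant unit vectors but no non-zero invariant vector (for instance, $G = \mathbb{Z}$ acting through characters $k \mapsto e^{ik/n}$). To handle this, I would invoke the Bohr compactification $bG$: since every finite-dimensional unitary representation of $G$ factors through $bG$, the representation $\pi|_{H_f}$ extends continuously to $bG$, and the almost-invariance data become positive-definite coefficients on $G$ converging pointwise to $1$. The plan is to use this structural information to locate a non-compact amenable quotient $G/R$ of $G$, where $R = \bigcap_\sigma \ker(\sigma)$ ranges over all finite-dimensional unitary representations $\sigma$, and then pull back its left regular representation to $G$: by amenability of $G/R$ this pullback has almost invariant vectors, and since the left regular representation of a non-compact amenable group admits no non-zero finite-dimensional subrepresentation, neither does its pullback.

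The most delicate step is establishing non-compactness of $G/R$ in the concentrated case: it follows from the absence of $G$-invariant vectors in $\pi|_{H_f}$ together with the fact that continuous unitary representations of compact groups with almost invariant vectors always have invariant vectors, but making this precise requires a Fell-topological analysis of the neighborhood of the trivial representation in $\hat{G}$, and a careful treatment of the degenerate subcases (e.g., simple non-compact Lie groups such as $SL_2(\mathbb{R})$ where all finite-dimensional unitary representations are trivial, so the original $\pi$ already has no non-zero finite-dimensional subrepresentation) as carried out in Bekka--Valette's original argument.
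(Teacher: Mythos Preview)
The paper does not prove Theorem~\ref{BV} in full generality; it is quoted from \cite{BV93}, with the remark that the original argument goes through affine isometric actions and $1$-cocycles. What the paper contributes (\S4.1) is a proof in the special case of countable ICC groups, by passing to $M=L(G)$, invoking Theorem~\ref{nwmixingthm} to obtain a weakly mixing $M$-$M$-bimodule with almost central vectors, and translating back to a weakly mixing unitary representation of $G$ with almost invariant vectors via the associated positive-definite functions. Your route---decompose $H=H_f\oplus H_\infty$ and handle the concentrated case via the Bohr compactification---is different from both.

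More importantly, your outline has a genuine gap in the concentrated case. You assert that $G/R$ is amenable, where $R=\bigcap_\sigma\ker\sigma$ over all finite-dimensional unitary $\sigma$; but $G/R$ is simply the maximally almost periodic quotient of $G$, and there is no reason for it to be amenable. For $G=F_2$ one has $R=\{e\}$ by residual finiteness, so $G/R\cong F_2$ is non-amenable---and $F_2$ \emph{can} land in the concentrated case, e.g.\ by pulling back non-trivial characters of the abelianization $\mathbb{Z}^2$ converging to the trivial character, which gives a direct sum of one-dimensional representations with almost invariant vectors and no invariant vector. Without amenability, the left regular representation of $G/R$ has no almost invariant vectors and your construction yields nothing. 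The decomposition $H_f\oplus H_\infty$ is a natural first move, but handling the concentrated case requires a different mechanism: this is exactly where the cocycle/Gaussian machinery of the original Bekka--Valette proof, or the bimodule technology of the paper's II$_1$-factor approach, does the real work.
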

In Section \ref{sectionnwx} we will show the von Neumann algebraic analogue of this characterization of property (T) for II$_1$ factors. Although we will not use it in our proof, we mention that the proof of Theorem \ref{BV} goes through \emph{affine isometric actions} of $G$, or equivalently, \emph{1-cocycles} on $G$. The von Neumann algebraic analogue of 1-cocycles is closable derivations (see \cite{Pe04} for characterizations of property (T) using closable derivations).

\subsection{Property Gamma and Finite Index Inclusions}\label{gammaprelim}
Property Gamma for II$_1$ factors is introduced by 
Murray and von Neumann in \cite{MvN43}. A II$_1$ factor $M$  \emph{has Property Gamma} if 
there exists some $c>0$, such that for every $F \subset  M$ finite and $\varepsilon > 0$, there exists $x \in (M)_1$ with $ \max_{u \in F} ||xu -ux||_2 \le \varepsilon$ and $|| x - \tau(x) 1||_2 \ge c $.

We have the following equivalent conditions:
\begin{theorem}\label{gammaequivdef} (\cite{Co74}, \cite{Co75}) The following are equivalent for a separable II$_1$ factor $M$:
\begin{enumerate}
    \item $M$ does not have Property Gamma;
     \item Given an ultrafilter $\omega$ on $\mathbb{N}$, $M' \cap M^{\omega} = \mathbb{C}1$;
    \item $M \subset M$ has spectral gap, i.e., there exists a finite subset $F \subset \mathcal{U}(M)$, and $c > 0$, such that $|| x - \tau(x) 1||_2 \le \max_{u \in F} ||xu -ux||_2/c$ for any $x \in M$; 
    \item $M$ is a \emph{full} factor, i.e., the subgroup of inner automorphisms $\Inn(M)$ is closed in the automorphism group $\Aut(M)$.
\end{enumerate}
\end{theorem}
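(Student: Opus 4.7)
The plan is to prove $(1)\Leftrightarrow(2)\Leftrightarrow(3)\Leftrightarrow(4)$, treating the first two equivalences as essentially unwinding of definitions and the last as the substantive content of Connes' fullness criterion. For $(1)\Leftrightarrow(2)$, I would just pass through the ultrapower. Assuming Property Gamma with constant $c>0$, fix an increasing exhaustion $F_1\subset F_2\subset\cdots\subset (M)_1$ whose union is $\|\cdot\|_2$-dense in $(M)_1$, and apply Gamma to produce contractions $x_n\in(M)_1$ with $\max_{u\in F_n}\|[x_n,u]\|_2\le 1/n$ and $\|x_n-\tau(x_n)1\|_2\ge c$. The bound $\|x_n\|\le 1$ together with $\|[x_n,y-u]\|_2\le 2\|y-u\|_2$ upgrades commutation with $\bigcup F_n$ to commutation with all of $M$, so $(x_n)_n$ represents a non-scalar element of $M'\cap M^\omega$. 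Conversely, any non-scalar element of $M'\cap M^\omega$ lifts to a bounded sequence witnessing Gamma.

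For $(2)\Leftrightarrow(3)$ the forward direction from (3) is immediate: any representative $(x_n)_n$ of an element of $M'\cap M^\omega$ satisfies $c\|x_n-\tau(x_n)1\|_2\le\max_{u\in F}\|[x_n,u]\|_2\to 0$ along $\omega$. For the converse I would start from the failure of (3): for each $n$ there exist $x_n\in M$ and finite $F_n\subset\mathcal{U}(M)$ exhausting a $\|\cdot\|_2$-dense sequence of unitaries, with $\|x_n-\tau(x_n)1\|_2=1$ but $\max_{u\in F_n}\|[x_n,u]\|_2<1/n$. These $x_n$ need not be bounded, so the essential technical step is a truncation: replace $x_n$ by a spectral cut-off $y_n=\chi_{[-K,K]}(\operatorname{Re}(x_n-\tau(x_n)1))+i\,\chi_{[-K,K]}(\operatorname{Im}(x_n-\tau(x_n)1))$, and use the Powers-Størmer inequality to show that spectral projections of the self-adjoint parts asymptotically commute with $F_n$ and that $\|y_n-\tau(y_n)1\|_2\ge c_0>0$ uniformly for suitable $K$. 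The resulting contractions represent a non-scalar element of $M'\cap M^\omega$.

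The main obstacle is $(3)\Leftrightarrow(4)$, which is the heart of Connes' theorem. The direction $(3)\Rightarrow(4)$ is short: for $u_n\in\mathcal{U}(M)$ with $\Ad(u_n)\to\id$ pointwise, (3) immediately gives $\|u_n-\tau(u_n)1\|_2\to 0$, so $|\tau(u_n)|\to 1$ and $u_n$ approaches $\mathbb{T}1$ in $\|\cdot\|_2$, which is the defining property of fullness. For $(4)\Rightarrow(3)$ I would argue contrapositively: starting from the bounded witnesses $y_n$ to the failure of (3) constructed above, take self-adjoint parts and convert to unitaries by selecting spectral projections $p_n=\chi_{I_n}(y_n)$ on intervals $I_n$ with $\tau(p_n)$ staying uniformly in a compact subinterval of $(0,1)$, ensuring $\|p_n-\tau(p_n)1\|_2\ge c'>0$. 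A standard application of the Powers-Størmer inequality yields $\|[p_n,v]\|_2\lesssim\|[y_n,v]\|_2^{1/2}\to 0$, so the self-adjoint unitaries $u_n=2p_n-1$ satisfy $\Ad(u_n)\to\id$ pointwise while $\|u_n-\tau(u_n)1\|_2\ge 2c'>0$, contradicting fullness. The delicate point is the choice of the spectral intervals $I_n$: one has to avoid possible atoms of the spectral measure and simultaneously preserve the trace condition and the commutation estimate, and this is where the full strength of the Powers-Størmer inequality is used.
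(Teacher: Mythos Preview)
The paper does not prove this theorem. Theorem~\ref{gammaequivdef} is stated in the preliminaries section with the attribution ``(\cite{Co74}, \cite{Co75})'' and no proof is given; it is simply quoted as Connes' classical characterization of fullness and then used as a black box (notably in Lemma~\ref{nongammasubtracialcpmap} and in the final step of Theorem~\ref{nwmixingthm}). So there is no ``paper's own proof'' to compare your proposal against.

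That said, your sketch is a reasonable outline of the standard argument and is broadly in line with how Connes' proof is usually presented. The equivalences $(1)\Leftrightarrow(2)$ and $(3)\Rightarrow(2)$, $(3)\Rightarrow(4)$ are indeed routine, and you correctly isolate the two genuine difficulties: passing from unbounded $L^2$-witnesses to bounded ones (truncation via spectral calculus and Powers--St{\o}rmer), and converting asymptotically central self-adjoints into asymptotically central unitaries bounded away from scalars. One caution: in your $(4)\Rightarrow(3)$ step you feed in ``the bounded witnesses $y_n$ constructed above,'' but those witnesses were produced under the assumption $\neg(3)$, so your argument as written is really $\neg(3)\Rightarrow\neg(4)$, i.e.\ $(4)\Rightarrow(3)$ only by combining with the already-established $\neg(3)\Rightarrow\neg(2)$ truncation. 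That is fine logically, but be aware you are not giving an independent route from fullness to spectral gap---you are routing through $(2)$. This is also how the result is typically packaged.
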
 

We will use the following lemmas in the proof of Theorem \ref{nwmixingthm1}.

\begin{lemma}\label{trivalrelcommutant}
Let $\omega$ be an ultrafilter on $\mathbb{N}$. Let $M$ be a II$_1$ factor and $A \subset M$ a subfactor with $[M:A]< \infty$. Suppose $A$ does not have property Gamma. Then $A' \cap M  = \mathbb{C}1$ implies $A' \cap M^{\omega} = \mathbb{C}1$.
\end{lemma}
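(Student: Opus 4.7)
The plan is to establish weak spectral gap for the inclusion $A\subset M$, from which $A'\cap M^\omega = \mathbb{C}$ follows at once thanks to $A'\cap M=\mathbb{C}$. Given $x\in A'\cap M^\omega$ represented by a bounded sequence $(x_n)\subset (M)_1$ with $\lim_\omega\|[a,x_n]\|_2=0$ for every $a\in A$, I would first apply the $A$-bimodular, $\|\cdot\|_2$-contractive conditional expectation $E_A:M\to A$. The sequence $(E_A(x_n))\subset A$ inherits the almost $A$-centrality and thus defines an element of $A'\cap A^\omega$, which equals $\mathbb{C}\cdot 1$ by Theorem \ref{gammaequivdef} since $A$ is full. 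Setting $c=\tau_\omega(x)=\lim_\omega\tau(E_A(x_n))$ and subtracting $c\cdot 1$ from $x$, I reduce to the case $E_{A^\omega}(x)=0$; it remains to show $\|x_n\|_2\to 0$ along $\omega$.

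Next, I would fix a Pimsner-Popa basis $\{m_1=1,m_2,\ldots,m_d\}$ for $A\subset M$ with $E_A(m_i^* m_j)=\delta_{ij}$ and decompose
\[ x_n=\sum_{i=1}^{d} m_i\, a_i^{(n)},\qquad a_i^{(n)}:=E_A(m_i^* x_n)\in A, \]
so that $a_1^{(n)}=E_A(x_n)\to 0$ in $\|\cdot\|_2$. For each $u\in\mathcal{U}(A)$, writing $u x_n u^*=\sum_i(u m_i u^*)(u a_i^{(n)} u^*)$ and applying $E_A(m_j^*\,\cdot\,)$ to $\|u x_n u^*-x_n\|_2\to 0$ produces the coupled system
\[ \sum_{i} E_A(m_j^* u m_i u^*)\cdot u a_i^{(n)} u^* - a_j^{(n)}\xrightarrow[n\to\omega]{\|\cdot\|_2} 0,\qquad j=1,\ldots,d. \]
Passing to the ultrapower classes $b_i=(a_i^{(n)})_\omega\in A^\omega$ (with $b_1=0$), the $j=1$ relation becomes $\sum_{i\ge 2} E_A(u m_i u^*)\cdot u b_i u^* = 0$ in $A^\omega$ for every $u\in\mathcal{U}(A)$.

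The final step is to deduce $b_i=0$ for all $i\ge 2$ from the system above. Once this is known, $\|x_n\|_2\to 0$ along $\omega$ and hence $x=0$, giving $A'\cap M^\omega=\mathbb{C}$. Equivalently, one needs the bifinite $A$-$A$-bimodule $L^2(M)\ominus L^2(A)$ — which has no nonzero $A$-central vectors because $A'\cap M=\mathbb{C}$, and is bifinite because $[M:A]<\infty$ — to admit no nonzero almost $A$-central vectors, exploiting that $A$ is full. This spectral gap statement for bifinite bimodules of full factors is the main obstacle; the natural way to establish it combines the Pimsner-Popa coupled relations above with repeated application of the spectral gap of $A$ on $L^2(A)$ guaranteed by Theorem \ref{gammaequivdef}, so that the $b_i$ are forced to become scalars and then vanish via the irreducibility of $A\subset M$.
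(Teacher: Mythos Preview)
Your approach is genuinely different from the paper's, and you have correctly isolated the heart of the matter: you need that the bifinite $A$-$A$-bimodule $L^2(M)\ominus L^2(A)$, which has no nonzero $A$-central vectors by irreducibility, admits no nonzero almost-central bounded sequences when $A$ is full. The paper does \emph{not} prove this directly. Instead it first shows $A'\cap M^\omega$ cannot be diffuse: if it were, an orthonormal sequence of unitaries in $A'\cap M^\omega$ (all with $E_{A^\omega}$-image in $\mathbb{C}1$, by fullness of $A$) would force $[M^\omega:A^\omega]=\infty$, contradicting $[M^\omega:A^\omega]=[M:A]$ (\cite{PP86}). Then it invokes a dichotomy lemma of Ioana (Lemma~2.7 of \cite{Io12}) producing a central projection $e$ with $(A'\cap M^\omega)(1-e)$ diffuse and $(A'\cap M^\omega)e=(A'\cap M)e$; non-diffuseness forces $e=1$.

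Your proposed resolution of the obstacle has a real gap. The coupled relations $\sum_i E_A(m_j^* u m_i u^*)\,u b_i u^* = b_j$ do not decouple, so ``repeated application of the spectral gap of $A$ on $L^2(A)$'' does not force each $b_i$ to be a scalar. Already in the model case $M=A\rtimes G$ with $G$ finite acting by outer automorphisms, the basis is $\{u_g\}$ and the relations become $\alpha_g^{-1}(a)\,b_g=b_g\,a$ in $A^\omega$; fullness of $A$ only gives $b_g^*b_g,\,b_gb_g^*\in\mathbb{C}1$, so each nonzero $b_g$ is a multiple of a unitary in $A^\omega$ implementing $\alpha_g^{-1}$, not a scalar. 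To kill such a $b_g$ you then need that $\mathrm{Ad}(v_n)\to\alpha_g^{-1}$ with $v_n\in\mathcal U(A)$ contradicts outerness, i.e.\ you must invoke that $\Inn(A)$ is closed in $\Aut(A)$ --- an additional use of fullness beyond the $L^2(A)$ spectral gap you cite. In the general (non-crossed-product) case the intertwining is with a finite-index embedding $\theta:A\to pM_n(A)p$, and the analogous rigidity step is more delicate still. None of this is in your outline, so as written the final step is not established; the paper's route via \cite{PP86} and \cite{Io12} sidesteps exactly this difficulty.
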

\begin{proof}
We first apply the proof of Proposition 1.11 of \cite{PP86} to show that $A' \cap M^{\omega}$ is not diffuse. Since $A$ does not have property Gamma, $A' \cap A^{\omega} = \mathbb{C}1$ and $E_{A^{\omega}} (A' \cap  M^{\omega}) = \mathbb{C}1$. 
If $A' \cap M^{\omega}$ is diffuse, there exists a sequence $(u_n)_{n=1}^{\infty} \in \mathcal{U}(A' \cap M^{\omega})$ with $\tau_{\omega}(u_nu_m^*) = \delta_{n,m}$. 
Then $(u_n A^{\omega})_{n=1}^{\infty}$ in $M^{\omega}$ satisfies $\langle u_n A^{\omega} , u_m A^{\omega} \rangle_{M^{\omega}} = \tau_{\omega}(E_{A^{\omega}}(u_m^*u_n) A^{\omega}) = 0$ for $n \neq m$, since $E_{A^{\omega}}(u_m^*u_n) \in \mathbb{C}1$ and $\tau_{\omega}(u_nu_m^*) =0$.
It follows that $[M^{\omega}: A^{\omega}] = \infty$. 
But $[M^{\omega}: A^{\omega}] = [M:A] $ for any inclusion of II$_1$ factors (Proposition 1.10 of \cite{PP86}), which gives a contradiction. 
Next, applying Lemma 2.7 of \cite{Io12} to the inclusion $A \subset M$, there exists a projection $e \in \mathcal{Z}(A' \cap  M^{\omega}) \cap \mathcal{Z}(A' \cap  M )$ such that $(A' \cap  M^{\omega}) (1-e)$ is diffuse and $(A' \cap  M^{\omega}) e = (A' \cap  M) e$. 
Since $A' \cap M^{\omega}$ is not diffuse, $e \neq 0$. Then $e$ must equal $1$, the only projection in $\mathcal{Z}(A' \cap  M )$ other than $0$. 
So we have  $A' \cap  M^{\omega}  = A' \cap  M = \mathbb{C}1$.  
\end{proof}

\begin{lemma}\label{nongammasubtracialcpmap}
Let $M$ be a II$_1$ factor. 
Suppose $M$ does not have property Gamma. 
Then for any $\varepsilon' > 0$, 
there exists a finite subset $F \subset \mathcal{U}(M)$ and $\varepsilon > 0$, 
such that for any subunital, 
normal completely positive map $\phi: M \to M$ with $\max_{x \in F}||  \phi(x) - x||_2 \le \varepsilon $, 
there exists a subunital subtracial normal completely positive map $\psi: M \to M$ such that $||  \psi(x) - \phi(x)||_2 \le \varepsilon'  $ for any $x \in (M)_1$.
\end{lemma}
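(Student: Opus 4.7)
The plan is to pass to the bimodule picture, replace the canonical vector $\xi_\phi$ by a subtracial vector obtained via a spectral cut-off of the Radon-Nikodym derivative $T_\phi = d(\tau\circ\phi)/d\tau$, and pull back to a CP map. The no Gamma hypothesis will be used to force $T_\phi$ to be close to $1$ in $L^1$. Fix $\varepsilon' > 0$. By Theorem \ref{gammaequivdef}(3), I would pick a finite set $F_0 \subset \mathcal U(M)$ containing $1$ and a constant $c_0 > 0$ with
\[
\|x - \tau(x)1\|_2 \le c_0^{-1}\max_{u \in F_0}\|xu - ux\|_2 \quad \text{for all } x \in M.
\]
The set $F$ in the conclusion will be a finite enlargement of $F_0$ closed under the pairwise products appearing below, and $\varepsilon > 0$ will be chosen sufficiently small depending on $\varepsilon'$, $F_0$, $c_0$.

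\textbf{Bimodule reformulation.} Form the $M$-$M$-bimodule $\mathcal{H}_\phi$ with cyclic vector $\xi_\phi$ satisfying $\langle \xi_\phi, a\xi_\phi b\rangle = \tau(a\phi(b))$. Using $1 \in F$, a direct computation gives
\[
\|u\xi_\phi - \xi_\phi u\|^2 = 2\tau(\phi(1)) - 2\operatorname{Re}\tau(\phi(u^*)u) \le 2\varepsilon \quad \text{for } u \in F,
\]
since $\tau(\phi(1)) \le 1$ and $\operatorname{Re}\tau(\phi(u^*)u) \ge 1 - \varepsilon$. The left and right Radon-Nikodym derivatives of the states associated to $\xi_\phi$ are $T_\phi \in L^1(M)_+$ and $\phi(1) \in M_+$, and $\|\phi(1) - 1\|_1 \le \|\phi(1) - 1\|_2 \le \varepsilon$.

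\textbf{Construction given the key estimate.} Following the proof of Lemma \ref{subtraciallemma}, set $a = \chi_{[0,1]}(T_\phi) + T_\phi^{-1/2}\chi_{(1,\infty)}(T_\phi) \in (M)_1$. Since $a$ commutes with $T_\phi$, one has $aT_\phi a = \min(T_\phi, 1) \le 1$, so $\eta := a\xi_\phi \in \mathcal{H}_\phi$ is subtracial, and by the Powers-Størmer inequality
\[
\|\eta - \xi_\phi\|^2 = \tau\bigl((T_\phi^{1/2} - 1)^2 \chi_{(1,\infty)}(T_\phi)\bigr) \le \|T_\phi^{1/2} - 1\|_2^2 \le \|T_\phi - 1\|_1.
\]
The associated subunital subtracial normal CP map is $\psi(x) = L_\eta^* \cdot x \cdot L_\eta = \phi(a x a)$. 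Using the Kadison-Schwarz bound $\|\phi(y)\|_2^2 \le \tau(\phi(y^*y)) = \|T_\phi^{1/2} y^*\|_2^2$ together with the identity $T_\phi^{1/2} a = \min(T_\phi^{1/2}, 1)$, one writes $T_\phi^{1/2}(a x a - x) = (T_\phi^{1/2} a) x (a - 1) - (T_\phi^{1/2} - T_\phi^{1/2} a) x$ and bounds each factor in $\|\cdot\|_2$ by $\|T_\phi - 1\|_1^{1/2}$ (the first factor via $\|a - 1\|_2^2 \le \|T_\phi - 1\|_1$, the second via $\|(T_\phi^{1/2} - 1)_+\|_2^2 \le \|T_\phi - 1\|_1$, both from Powers-Størmer). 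This yields
\[
\|\psi(x) - \phi(x)\|_2 = \|\phi(a x a - x)\|_2 \le 2\|T_\phi - 1\|_1^{1/2}
\]
for every $x \in (M)_1$. Once the key estimate $\|T_\phi - 1\|_1 \le (\varepsilon'/2)^2$ is established, this finishes the proof.

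\textbf{Main obstacle.} The crux is the quantitative estimate on $\|T_\phi - 1\|_1$ using no Gamma. Since $T_\phi$ need not be bounded, one cannot apply the spectral-gap inequality directly. The natural approach is to truncate $T_\phi$ at level $N$ to obtain $T_N = T_\phi \chi_{[0,N]}(T_\phi) \in M$ (with tail controlled by $\tau(T_\phi) \le 1$), then transfer commutator information from the almost-centrality of $\xi_\phi$ to commutator estimates for $T_N$, via the identity $\tau([T_\phi, u] v) = \tau(\phi(u v) - \phi(v u))$ and the approximate multiplicativity $\|\phi(v w) - \phi(v)\phi(w)\|_2 = O(\varepsilon)$ for $v, w, v w \in F$ obtained from Kadison-Schwarz; then apply the no Gamma inequality to $T_N$ to get $\|T_N - \tau(T_N)1\|_2$ small, and use $\tau(T_N) \approx \tau(T_\phi) = \tau(\phi(1)) \approx 1$. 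Closing $F$ under pairwise products of elements of $F_0$ is needed precisely for the approximate-multiplicativity step, and the truncation parameter $N$ must be chosen in terms of $\varepsilon'$ so that the tail error is absorbed.
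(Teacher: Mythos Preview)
Your construction of $\psi(x)=\phi(axa)$ with $a=\chi_{[0,1]}(T_\phi)+T_\phi^{-1/2}\chi_{(1,\infty)}(T_\phi)$, and the reduction of the whole lemma to the single estimate $\|T_\phi-1\|_1\ll 1$, are correct and agree with the paper almost verbatim (your bound $\|\psi(x)-\phi(x)\|_2\le 2\|T_\phi-1\|_1^{1/2}$ is in fact a little cleaner than the one written there).

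The gap is in your ``Main obstacle'' paragraph. The truncation route does not close as sketched. To apply the spectral-gap inequality to $T_N\in M$ you would need $\max_{u\in F_0}\|[T_N,u]\|_2$ small, but from $\|[T_\phi,u]\|_1$ small there is no passage to this: the cut-off $t\mapsto t\,\chi_{[0,N]}(t)$ is not operator-Lipschitz with a usable constant, and any smoothed version carries a factor growing with $N$. Simultaneously the $L^1$-tail $\tau\bigl(T_\phi\,\chi_{(N,\infty)}(T_\phi)\bigr)$ is \emph{not} uniformly controlled in $\phi$ by $\tau(T_\phi)\le 1$ (Markov only bounds $\tau(\chi_{(N,\infty)}(T_\phi))$, not the $L^1$-mass on the tail), so $N$ cannot be fixed once and for all in terms of $\varepsilon'$. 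The approximate-multiplicativity observation, while true, addresses neither of these two problems; it only re-derives $\|[T_\phi,u]\|_1$ small, which you already have from the bimodule picture.

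What you are missing is that the inequality $\|y-\tau(y)1\|_2\le c_0^{-1}\max_{u\in F_0}\|[y,u]\|_2$ extends from $y\in M$ to every $y\in L^2(M)$, simply because both sides are $\|\cdot\|_2$-continuous and $M$ is $\|\cdot\|_2$-dense. One may therefore apply it \emph{directly} to $y=T_\phi^{1/2}\in L^2(M)_+$, with the commutator controlled by Powers--St{\o}rmer:
\[
\|[T_\phi^{1/2},u]\|_2^2=\|uT_\phi^{1/2}u^*-T_\phi^{1/2}\|_2^2\le\|uT_\phi u^*-T_\phi\|_1=\|[T_\phi,u]\|_1=\sup_{v\in (M)_1}\bigl|\tau(\phi(uv)-\phi(vu))\bigr|,
\]
and the last quantity is bounded via the almost-centrality of $\xi_\phi$ exactly as you indicate (no product-closure of $F$ is needed). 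This gives $\|T_\phi^{1/2}-\tau(T_\phi^{1/2})\,1\|_2$ small; combined with $\tau(T_\phi)=\tau(\phi(1))\in[1-\varepsilon,1]$ one deduces that $\tau(T_\phi^{1/2})$ is close to $1$, and a second application of Powers--St{\o}rmer yields $\|T_\phi-1\|_1$ small. This is precisely how the paper finishes the argument.
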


\begin{proof}
By Theorem \ref{gammaequivdef}, 
$M$ has spectral gap, 
i.e., there exists a finite subset $F_1 \subset \mathcal{U}(M)$ 
and $c>0$ such that for any $y \in L^2(M)$, $\max_{x \in F_1} ||xy - yx||_2 \ge c||y - \tau(y)||_2$.
Let $F =F_1 \cup F_1^* \cup 1 \subset \mathcal{U}(M)$.

Let $\varepsilon > 0$. Suppose $\phi: M \to M$ is a subunital, normal completely positive map with $\max_{x \in F}||  \phi(x) - x||_2 \le \varepsilon$. 
Let $a \in L^1(M,\tau)_+$ be the Radon-Nikod\'ym derivative of $\phi$ with respect to $\tau$, 
i.e., $\tau(\phi(x)) = \tau(xa)$ for all $x \in M$. 
Let $b = f(a)$ where $$ f(t) = \begin{cases} 
          1 & 0 \le t\leq 1 \\
          t^{-1/2} & t>1.
       \end{cases}
    $$
Then $b, b^2, ab^2 \in M_{+} \cap (M)_1$, 
and $b$ commutes with $a$. 
For every $x \in M_+$, $\tau(x) \ge \tau(xab^2) = \tau(bxba) = \tau (\phi(bxb))$. 
Therefore $\psi: M\to M$, $x \mapsto \phi(bxb)$ is a subunital, subtracial, normal completely positive map. 
By Stinespring's dilation Theorem and the subunital assumption, 
$\tau(\phi(x)^* \phi(x) ) \le \tau(|| \phi(1)|| \phi(x^*x)  ) \le \tau(\phi(x^*x)) $ for any $x \in M$,
and we have 
$|| \phi(x) - \psi(x)||_2^2  = || \phi(x- bxb)||_2^2 \le ||\phi( (x- bxb)^*(x- bxb)  ) ||_1 $. 
So for all $x \in (M)_1$,
\begin{align*}
    || \phi(x) - \psi(x)||_2^2    &\le ||\phi( (x- bxb)^*(x- bxb)  )||_1 \\
    &= \tau \big( (x- bxb)^*(x- bxb) a \big)\\
    &= || x- bxb ||_2^2 + \tau \big( (x- bxb)^*(x- bxb) (a-1) \big)\\
    &\le \big( ||x(1-b) ||_2 + ||(1-b)xb||_2 \big)^2 + ||(x- bxb)^*(x- bxb) ||_{\infty} || a-1||_1\\
    &\le 4 \big(||1-b ||_2^2 + || a-1||_1 \big).
\end{align*}
It is sufficient to estimate $|| a-1||_1$, 
since $||b-1||_2^2 = ||\mathbbm{1}_{(\rm{Id}(a) > 1)} (f(a) -1)||_2^2  \le ||g(a)||_1 $ by the Powers-Størmer inequality, 
where $g(t) = \mathbbm{1}_{(t > 1)} (\frac{1}{t} -1)$, 
and $||g(a)||_1 \le ||a-1 ||_1$. We begin by showing $\sqrt{a}$ is close to a scalar using the spectral gap property.

Let $\mathcal{H}_{\phi}$ be the $M$-$N$-bimodule corresponding to the normal completely positive map $\phi$, 
and let $\xi_{\phi} = 1 \otimes 1 \in \mathcal{H}_{\phi}$. 
For any $x \in \mathcal{U}(M)$, we have (see \cite{Po01} \S 1.1.2) 
\begin{align*}
    || x \xi_{\phi} - \xi_{\phi}x ||_2^2 &= \langle x\otimes 1 - 1\otimes x,  x\otimes 1 - 1\otimes x \rangle_{\phi}\\
    & = 2\tau(\phi(1) ) - 2\operatorname{Re} \tau(\phi(x)x^*)\\
    &\le 2 (1 - \operatorname{Re} \tau(\phi(x)x^*))\\
    &= 2 \Big(1 - 2\operatorname{Re} \tau(\phi(x)x^*)   + \operatorname{Re} \tau(\phi(x)\phi(x)^*) + \operatorname{Re} \tau\big(\phi(x)(x^* - \phi(x)^*)  \big)   \Big)\\
    &\le 2 \big(||\phi(x) -x ||_2^2 + ||\phi(x)||_2||\phi(x) -x ||_2 \big)\\
    &\le  2 \big(||\phi(x) -x ||_2^2 +  ||\phi(x) -x ||_2 \big).
\end{align*}
The last inequality follows from $||\phi(x)||_2^2 \le ||\phi(x^*x)||_1 = ||\phi(1)||_1 \le 1$, again by Stinespring's dilation Theorem and the subunital assumption.
Therefore if $\varepsilon = \max_{x \in F}|| \phi(x) -x||_2$, 
then 
$  \max_{x \in F}  || x \xi_{\phi} - \xi_{\phi}x ||_2 \le \sqrt{2(\varepsilon^2 + \varepsilon)}$.

For any $x \in M$, $||xa - ax ||_1 = \sup_{y \in (M)_1} |\tau(xay - axy)| = \sup _{y \in (M)_1} |\tau(\phi(yx) - \phi(xy))| $. 
For any $y \in (M)_1$ and $x \in F_1$,
\begin{align*}
     |\tau(\phi(yx) - \phi(xy))| &= |  \langle yx \xi_{\phi} - xy\xi_{\phi}, \xi_{\phi} \rangle_{\phi} |\\
     &= |  \langle y (x \xi_{\phi} -\xi_{\phi} x) , \xi_{\phi} \rangle_{\phi} - \langle y \xi_{\phi}, x^* \xi_{\phi} -\xi_{\phi} x^*\rangle_{\phi} |\\
     &\le 2||\xi_{\phi}||_2  \max_{x \in F}  || x \xi_{\phi} - \xi_{\phi}x ||_2 \\
     &\le 2\sqrt{2(\varepsilon^2 + \varepsilon)}.
\end{align*}
We have for any $x \in \mathcal{U}(M)$, $||x\sqrt{a} - \sqrt{a} x ||_2 = ||x\sqrt{a}x^* - \sqrt{a}||_2 \le || xax^* -a||_1^{\frac{1}{2}} = ||xa - ax||_1^{\frac{1}{2}} $ by the Powers-Størmer inequality. 
It follows from the spectral gap property that
\begin{align*}
    ||\sqrt{a} - \tau(\sqrt{a}) ||_2 \le \frac{\max_{x \in F_1} ||x\sqrt{a} - \sqrt{a} x ||_2 }{c} \le \frac{\sqrt{2}(2(\varepsilon^2 + \varepsilon))^{\frac{1}{4}}}{c}.
\end{align*}
We will assume $\varepsilon<\min\{1, c^8\}$. So $||\sqrt{a} - \tau(\sqrt{a}) ||_2 \le \frac{2\varepsilon^{\frac{1}{4}}}{c}< 2\varepsilon^{\frac{1}{8}}$.

Next we show $\tau(\sqrt{a})$ is close to $1$. 
Let $t = \tau(\sqrt{a})$.
Then
\[
|\tau(a) -1| = |\tau(\phi(1) -1) | \le ||\phi(1) -1||_1 \le||\phi(1) -1||_2 \le \varepsilon ,
\]
and 
\begin{align*}
    || a - t^2 \cdot 1||_1 &\le ||\sqrt{a} - t \cdot 1 ||_2 || \sqrt{a} + t \cdot 1||_2\\
    &\le 2\varepsilon^{\frac{1}{8}} (2\varepsilon^{\frac{1}{8}}+ 2 t)\\
    &\le 2\varepsilon^{\frac{1}{8}} (2\varepsilon^{\frac{1}{8}}+ 2 (1 + \varepsilon) )\\
    &\le 12\varepsilon^{\frac{1}{8}},
\end{align*}
where the first line of the above is the Powers-Størmer inequality, 
and the third inequality follows from $t \le ||\sqrt{a}||_2 = (\tau(a))^{\frac{1}{2}} \le 1 + |\tau(a)-1| \le 1+ \varepsilon $.
Therefore
\[
|t  -1| \le |t^2  -1|  \le |\tau( t^2  -a )| + |\tau(a) -1|  \le || a - t^2 \cdot 1||_1 + |\tau(a) -1|  \le 13\varepsilon^{\frac{1}{8}}.
\]
Applying the Powers-Størmer inequality again gives 
\begin{align*}
    || a-1||_1 &\le ||\sqrt{a} -1||_2 ||\sqrt{a} +1 ||_2\\
    &\le ||\sqrt{a} -1||_2 (||\sqrt{a} -1||_2 + 2)\\
    &\le (||\sqrt{a} - t ||_2 + |t  -1|) (||\sqrt{a} - t ||_2 + |t  -1| + 2)\\
    &\le (2+13)\varepsilon^{\frac{1}{8}} (  (2+13)\varepsilon^{\frac{1}{8}}+2)\\
    &\le 255\varepsilon^{\frac{1}{8}}.
\end{align*}
\end{proof}

\section{Property (T) and weak spectral gaps}\label{gap}
In this section we prove the weak spectral gap characterizations of property (T) for II$_1$ factors stated in Theorems \ref{wspectralgapimpliest} and \ref{wspectralgap irreducible implies t}. We formulate the results as follows:
\begin{theorem}\label{(T) and spectral gap thm}
For a $\mathrm {II}_{1}$ factor $M$, the following are equivalent:
\begin{enumerate}
    \item $M$ has property (T);
    \item any inclusion of $M$ into a tracial von Neumann algebra $\Tilde{M}$ has weak spectral gap, i.e., $M' \cap \Tilde{M}^{\omega} = (M' \cap \Tilde{M})^{\omega} $;
    \item for any inclusion of $M$ into a tracial von Neumann algebra $\Tilde{M}$ with $M' \cap \Tilde{M}= \mathbb{C}1$, we have $M' \cap \Tilde{M}^{\omega} =  \mathbb{C}1$.
\end{enumerate}
\end{theorem}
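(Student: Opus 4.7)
The implications (1) $\Rightarrow$ (2) $\Rightarrow$ (3) are essentially immediate: the first was observed at the end of Section \ref{section spectral gap}, and the second follows because, under the hypothesis $M' \cap \tilde M = \mathbb{C}1$, condition (2) gives $M' \cap \tilde M^{\omega} = (M' \cap \tilde M)^{\omega} = \mathbb{C}1$. All the work is in (3) $\Rightarrow$ (1), and since (2) is strictly stronger than (3) this single implication also subsumes (2) $\Rightarrow$ (1).

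My plan for (3) $\Rightarrow$ (1) is to argue by contrapositive, combining Theorem \ref{nwmixingthm1} with Shlyakhtenko's construction from Section \ref{shlyakhtenko construction}. Assume $M$ does not have property (T). Theorem \ref{nwmixingthm1} supplies an $M$-$M$-bimodule $\mathcal{H}$ with almost central unit vectors that contains no non-zero left- or right-finite-$M$-dimensional subbimodule, hence is weakly mixing on both sides. Replacing $\mathcal{H}$ by $\mathcal{H} \oplus \overline{\mathcal{H}}$ and then by $(\mathcal{H} \oplus \overline{\mathcal{H}})^{\oplus \infty}$, and invoking Lemma \ref{subtracialapproximation} together with averaging against $J$, I may assume the almost central sequence $(\xi_n)$ consists of subtracial, almost unit, $J$-fixed vectors in a symmetric, weakly mixing $M$-$M$-bimodule, still denoted $\mathcal{H}$.

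Next I form $\tilde M = M \vee \{s(\xi) : \xi \in \mathcal{H}^0\}''$, so that $L^2(\tilde M) \cong \mathcal{F}_M(\mathcal{H}) = L^2(M) \oplus \bigoplus_{n \geq 1} \mathcal{H}^{\otimes_M^n}$. Weak mixing of $\mathcal{H}$ carries over to each $\mathcal{H}^{\otimes_M^n}$ with $n \geq 1$, as recorded after the equivalent formulations in Section \ref{wmixingprelim}; combined with $M' \cap L^2(M) = \mathbb{C}1$ this gives $M' \cap L^2(\tilde M) = \mathbb{C}1$, hence $M' \cap \tilde M = \mathbb{C}1$, placing us in the hypothesis of (3). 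Subtraciality of $\xi_n$ forces $\|L_{\xi_n}\|, \|R_{\xi_n}\| \leq 1$, so $s(\xi_n) \in \tilde M$ with $\|s(\xi_n)\|_{\infty} \leq 2$ uniformly. Because $1_M$ is cyclic and separating for $\tilde M$, $\|y\|_2 = \|y \cdot 1_M\|$ for $y \in \tilde M$, and a direct computation using $J\xi_n = \xi_n$ gives $[x, s(\xi_n)]\,1_M = x\xi_n - \xi_n x$ for every $x \in M$. Therefore $\|[x, s(\xi_n)]\|_2 \to 0$, while $\|s(\xi_n)\|_2 = \|\xi_n\| \to 1$ and $\tau_{\tilde M}(s(\xi_n)) = \langle \xi_n, 1_M \rangle = 0$. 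Thus $(s(\xi_n))_n$ represents a non-scalar element of $M' \cap \tilde M^{\omega}$, contradicting (3).

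The genuine obstacle is producing $\mathcal{H}$: under the sole hypothesis that $M$ lacks property (T), one must exhibit a bimodule that simultaneously has almost central vectors and is weakly mixing on both sides. This is exactly the II$_1$-factor analogue of Bekka--Valette recorded as Theorem \ref{nwmixingthm1}, whose proof is deferred to Section \ref{sectionnwx}; once it is in hand, the remainder of the argument is a routine computation inside the Fock space.
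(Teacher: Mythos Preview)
Your argument is correct and follows the paper's route exactly: invoke Theorem \ref{nwmixingthm1} to obtain a both-sided weakly mixing bimodule with almost central vectors, symmetrize and pass to subtracial almost unit vectors via Lemma \ref{subtracialapproximation}, then feed this into Shlyakhtenko's construction so that weak mixing of each $\mathcal{H}^{\otimes_M^n}$ forces $M'\cap\tilde M=\mathbb{C}1$ while $(s(\xi_n))_n$ gives a non-scalar element of $M'\cap\tilde M^\omega$. One small remark: the ``averaging against $J$'' step is unnecessary (and you would otherwise have to justify that subtraciality survives it), since the identity $[x,s(\xi)]\,1_M = x\xi-\xi x$ holds for every $\xi\in\mathcal{H}^0$ directly from $l(\xi)\hat y=\xi y$ and $l(J\xi)^*|_{L^2(M)}=0$, with no use of $J\xi=\xi$; the paper simply takes the subtracial vectors in $\mathcal K^{\oplus\infty}\subset \mathcal K^{\oplus\infty}\oplus\overline{\mathcal K^{\oplus\infty}}$ without symmetrizing them.
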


By Section \ref{section spectral gap}, we have (1) $\implies$ (2) $\implies$ (3). So it suffices to show (3) implies (1). We prove first that (2) implies (1), and then assuming Theorem \ref{nwmixingthm} in the next section, we prove that (3) implies (1).

\begin{proof}[Proof of (2) $\implies $(1)]
Suppose $M$ does not have property (T). There is a $M$-$M$-bimodule $\mathcal{K}$ with almost central, unit vectors $(\eta_n)$ but no non-zero central vectors. 
By the subtracial approximation in Lemma \ref{subtracialapproximation}, $\mathcal{H} = \mathcal{K}^{\oplus \infty}$ has almost central, subtracial vectors $(\xi_n)$ such that $\lim_n || \xi_n ||_{2,\mathcal{H}} =1$,
and $\mathcal{H}$ has no non-zero central vectors.
Without loss of generality, we may assume that $\mathcal{H}$ is a symmetric $M$-$M$-bimodule with an anti-unitary operator $J$. 
This is because $\mathcal{H} \oplus \overline{\mathcal{H}}$ with the anti-unitary operator $J(x, \overline{y}) = (y,\overline{x})$ and the $M$-$M$-action  $a(x,\overline{y})b = (axb,a\overline{y}b)$, where $\overline{\mathcal{H}}$ is the contragredient $M$-$M$-bimodule, is always symmetric.
When $\mathcal{H}$ has no non-zero central vectors, neither does $\mathcal{H} \oplus \overline{\mathcal{H}}$.

Consider the full Fock space of $\mathcal{H}$,
    $\mathcal{F}_M(\mathcal{H}) = L^2(M) \oplus \bigoplus_{n = 1}^{\infty} \mathcal{H}^{\bigotimes_M^n}$ and the von Neumann algebra $ \Tilde{M} = M \vee \{ s(\xi) | \xi \in \mathcal{H}^0\}''$
with the  trace $\tau(x) = \langle x1_M,1_M \rangle_{\mathcal{F}_M(\mathcal{H})}$ (see section \ref{shlyakhtenko construction}).
Since each $\xi_n$ is subtracial, $(s(\xi_n)) \in \Tilde{M}^{\omega}$.
For every $x \in M$,
\begin{align*}
    || xs(\xi_n) - s(\xi_n)x ||_2^2 
    &=\Big\langle \big( xs(\xi_n) - s(\xi_n)x \big) 1_M, \big( xs(\xi_n) - s(\xi_n)x \big) 1_M  \Big\rangle_{\mathcal{F}_M(\mathcal{H})}\\ 
    &=\langle x\xi_n - \xi_n x,  x\xi_n - \xi_n x \rangle_{L^2(\Tilde{M})}\\
    &=|| x\xi_n - \xi_n x ||_2^2 .
\end{align*}
So $(s(\xi_n)) \in M' \cap \Tilde{M}^{\omega}$.

Suppose for some $w \in M'\cap \Tilde{M}$ and $s(\xi)$,
$
    \big\langle w, s(\xi) \big\rangle_{L^2(\Tilde{M})}= \big\langle w1_M, s(\xi)1_M \big\rangle_{\mathcal{F}_M(\mathcal{H})}\neq 0,
$
then $w1_M = (x, \eta, \dots) \in \mathcal{F}_M(\mathcal{H})$ for some $0 \neq \eta \in \mathcal{H}$, since $s(\xi)1_M = (0,\xi,0,... )$. 
For any $y \in M$, denote by $L_y$ and $R_y$ the left and right multiplication by $y$ respectively. 
Then $L_y$ and $R_y$ applied to $1_M \in \mathcal{F}_M(\mathcal{H}) $ are equal: $ L_y1_M =  R_y 1_M  = (y,0,\cdots)$.
Since $w \in M'\cap \Tilde{M}$,
and $ \Tilde{M}$ commutes with right multiplications by elements of $M$ on $\mathcal{F}_M(\mathcal{H})$, 
the left multiplication by $w$ commutes with both left and right multiplications by elements of $M$ on $\mathcal{F}_M(\mathcal{H})$.
For any $y \in M$, we have $L_y (w1_M) = w L_y1_M =  w R_y1_M =  R_y (w 1_M) $,
and $ \eta y = y\eta  $ by considering the $\mathcal{H}$-component of $L_y (w1_M) = R_y (w 1_M)$, which contradicts the choice of $\mathcal{H}$. 
Therefore $\langle w, s(\xi_n) \rangle_{L^2(\Tilde{M})} = 0$ for any $w \in M'\cap \Tilde{M}$ and $n$. 
It follows that 
\begin{align*}
&E_{M'\cap \Tilde{M}}(s(\xi_n)) = 0,\\
&\lim ||E_{M'\cap \Tilde{M}}(s(\xi_n)) - s(\xi_n)||_2^2 = \lim \big\langle s(\xi_n)1_M, s(\xi_n)1_M \big\rangle= \lim ||\xi_n||_2^2 = 1.
\end{align*}
So $(s(\xi_n)) \notin (M' \cap \Tilde{M})^{\omega}$.
\end{proof}

To show (3) implies (1), 
we consider the same construction as above.
We look for a $M$-$M$-bimodule $\mathcal{H}$ so that $ M'\cap \Tilde{M} = \mathbb{C}1$. 
Since $ M'\cap \Tilde{M} = \mathbb{C}1$ is equivalent to $\mathcal{H}$ being weakly mixing, 
this leads to the II$_1$ factor analogue of Bekka and Valette's result in the next section.

\begin{proof}[Proof of (3) $\implies $(1)]
Suppose $M$ does not have property (T). By Theorem \ref{nwmixingthm1} (see also Theorem \ref{nwmixingthm} in the next section), for every finite subset $F$ of $\mathcal{U}(M)$, and $\varepsilon>0$, there exists a left and right weakly mixing $M$-$M$-bimodule with a $(F,\varepsilon)$-almost central unit vector. 
Then there exists a left and right weakly mixing $M$-$M$-bimodule $\mathcal{K}$ with almost central unit vectors.
By Lemma \ref{subtracialapproximation}, $\mathcal{K}^{\oplus \infty}$ has almost unit, almost central subtracial vectors. 
Taking $\mathcal{H } = \mathcal{K}^{\oplus \infty} \oplus {\overline{\mathcal{K}^{\oplus \infty}}} $, we have a symmetric, left and right weakly mixing $M$-$M$-bimodule $\mathcal{H }$ with almost central, almost unit, subtracial vectors.
Let $    \mathcal{F}_M(\mathcal{H}) = L^2(M) \oplus \bigoplus_{n = 1}^{\infty} \mathcal{H}^{\bigotimes_M^n}$ be the full Fock space of $\mathcal{H}$ and $ \Tilde{M} = M \vee \{ s(\xi) | \xi \in \mathcal{H}^0\}''$. 
Then for any $n > 1$, $u \in \mathcal{U}(M)$ and $\xi_1 \otimes_M \dots\otimes_M \xi_n$, $\eta_1 \otimes_M \dots\otimes_M \eta_n \in \mathcal{H}^0 \odot {}^0\mathcal{H}^{\bigotimes_M^{n-1}}$, 
\begin{align*}
    &\sup\limits_{y \in (M)_1} \big|\langle u \xi_1 \otimes_M \dots\otimes_M \xi_n y , \eta_1 \otimes_M \dots\otimes_M \eta_n  \rangle\big|\\
     = & \sup\limits_{y \in (M)_1} \Big|\Big\langle u \xi_1   \big({}_M\langle \xi_2\otimes_M \dots\otimes_M \xi_n y , \eta_2 \otimes_M \dots\otimes_M \eta_n   \rangle\big), \eta_1 \Big\rangle_\mathcal{H}\Big|\\
     \le & ||R_{\xi_2\otimes_M \dots\otimes_M \xi_n}|| ||R_{\eta_2 \otimes_M \dots\otimes_M \eta_n}||  |\sup\limits_{y \in (M)_1} \big|\langle u \xi_1 y , \eta_1 \rangle_\mathcal{H}\big|,
\end{align*}
where $R_{\xi_2\otimes_M \dots\otimes_M \xi_n}$ is the right multiplication by $\xi_2\otimes_M \dots\otimes_M \xi_n$,  
a bounded operator from $L^2(M)$ to ${}^0\mathcal{H}^{\bigotimes_M^{n-1}}$,
and similar for $R_{\eta_2 \otimes_M \dots\otimes_M \eta_n}$.
It follows that 
\[
\lim_i \sup_{y \in (M)_1} \big|\langle u_i \xi_1 \otimes_M \dots\otimes_M \xi_n y , \eta_1 \otimes_M \dots\otimes_M \eta_n  \rangle\big|=0
\]
for the same sequence of unitaries $(u_i)$ such that $\lim_i \sup_{y \in (M)_1} |\langle u_i \xi y , \eta \rangle|_\mathcal{H}=0$ for any $\xi, \eta \in \mathcal{H}$. 
That is, all $\mathcal{H}^{\bigotimes_M^{n}}$ are left weakly mixing $M$-$M$-bimodules, and therefore have no non-zero $M$-central vectors. 
Since $\mathcal{F}_M(\mathcal{H}) = L^2(\Tilde{M})$, $ M'\cap \Tilde{M} \subset  M'\cap L^2(\Tilde{M}) = \mathbb{C}1_M$. 
From the proof of (2) $\implies $(1), we have  $(s(\xi_n)) \in M' \cap \Tilde{M}^{\omega}$ for the  almost central, almost unit, subtracial vectors $(\xi_n)$ in $\mathcal{H}$,
and $  M' \cap \Tilde{M}^{\omega} \neq \mathbb{C}1_M$. This gives an inclusion of $M$ into the tracial von Neumann algebra $\Tilde{M}$ where $M' \cap \Tilde{M}= \mathbb{C}1$, but $M' \cap \Tilde{M}^{\omega} \neq  \mathbb{C}1$.
\end{proof}

The equivalence of (1) and (2) in Theorem \ref{(T) and spectral gap thm} can be generalized to a characterization of relative property (T).
\begin{theorem} \label{(T) and spectral gap relative thm}
For a II$_1$ factor $M$ and $A \subset M$ a von Neumann subalgebra, $A \subset M$ has relative property (T) if and only if  $M' \cap \Tilde{M}^{\omega} \subset (A' \cap \Tilde{M})^{\omega} $ for any inclusion of $M$ into a tracial von Neumann algebra $\Tilde{M}$.
\end{theorem}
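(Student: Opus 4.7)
The plan is to mirror the equivalence $(1) \Longleftrightarrow (2)$ of Theorem \ref{(T) and spectral gap thm}, tracking the subalgebra $A$ throughout. For the forward direction, assume $A \subset M$ has relative property (T) and take $(x_n) \in M' \cap \Tilde{M}^{\omega}$ with $||x_n||_\infty \le 1$. Suppose for contradiction that $\liminf_{\omega} ||x_n - E_{A' \cap \Tilde{M}}(x_n)||_2 \ge \delta > 0$. Then $||x_n||_2 \ge \delta$ along $\omega$, so $\xi_n := x_n/||x_n||_2$ are unit vectors in the $M$-$M$-bimodule $L^2(\Tilde{M})$ that are asymptotically $M$-central. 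Applying the $\mathrm{II}_1$-factor form of relative property (T) recalled in Section \ref{prelim} with $\varepsilon = \delta/3$ produces a finite $F \subset M$ and $\delta' > 0$ such that, for $n$ large along $\omega$, there exists an $A$-central vector $\eta_n \in L^2(\Tilde{M})$ with $||\xi_n - \eta_n||_2 \le \varepsilon$. Since the $A$-central vectors in $L^2(\Tilde{M})$ form exactly $L^2(A' \cap \Tilde{M})$, scaling gives $||x_n||_2 \eta_n \in L^2(A' \cap \Tilde{M})$ and $||x_n - ||x_n||_2 \eta_n||_2 \le \varepsilon < \delta$, contradicting the assumed gap.

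For the backward direction, suppose $A \subset M$ fails relative property (T). Then there exist $\varepsilon_0 > 0$, an exhausting sequence of finite sets $F_n \subset M$, and $M$-$M$-bimodules $\mathcal{K}_n$ containing $(F_n, 1/n)$-almost central unit vectors $\xi_n$ at distance $\ge \varepsilon_0$ from the subspace of $A$-central vectors in $\mathcal{K}_n$. Setting $\mathcal{K} = \bigoplus_n \mathcal{K}_n$, the images of $\xi_n$ in $\mathcal{K}$ are still almost central unit vectors, and by orthogonality of the summands they remain at distance $\ge \varepsilon_0$ from the $A$-central subspace of $\mathcal{K}$. Lemma \ref{subtracialapproximation} produces almost central, subtracial, almost unit vectors $\eta_n$ in $\mathcal{K}^{\oplus \infty}$, obtained from convex combinations of the $\xi_{n_i}$ followed by the $||\cdot||$-small perturbation from Lemma \ref{subtraciallemma}; both operations preserve the lower bound, giving $||\eta_n - P \eta_n|| \ge \varepsilon_0/2$ for $n$ large, where $P$ is the projection onto $A$-central vectors. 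Symmetrizing $\mathcal{H} = \mathcal{K}^{\oplus \infty} \oplus \overline{\mathcal{K}^{\oplus \infty}}$ and running Shlyakhtenko's $M$-valued semicircular construction yields $\Tilde{M} = M \vee \{s(\xi) : \xi \in \mathcal{H}^0\}''$, with $(s(\eta_n)) \in M' \cap \Tilde{M}^{\omega}$ by the same computation as in the proof of $(2) \Longrightarrow (1)$ in Theorem \ref{(T) and spectral gap thm}.

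The new input is the upper bound $||E_{A' \cap \Tilde{M}}(s(\eta_n))||_2 \le ||P \eta_n||$. For $w \in A' \cap \Tilde{M}$ with $||w||_2 \le 1$, the commutation of $w$ with $A$ on the left and with right multiplication by $M$ on $\mathcal{F}_M(\mathcal{H}) = L^2(\Tilde{M})$, together with $a \cdot 1_M = 1_M \cdot a$ for $a \in A$, forces $w \cdot 1_M$ to be an $A$-central vector in $\mathcal{F}_M(\mathcal{H})$. Hence its $\mathcal{H}$-component $(w 1_M)_1$ is $A$-central in $\mathcal{H}$ with $||(w 1_M)_1|| \le ||w||_2 \le 1$, and
\[
|\langle s(\eta_n), w \rangle_{L^2(\Tilde{M})}| = |\langle \eta_n, (w 1_M)_1 \rangle_{\mathcal{H}}| = |\langle P\eta_n, (w 1_M)_1 \rangle| \le ||P\eta_n||.
\]
Taking the supremum over such $w$ gives the bound. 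Combined with $||s(\eta_n)||_2 = ||\eta_n||$, this yields $||s(\eta_n) - E_{A' \cap \Tilde{M}}(s(\eta_n))||_2^2 \ge ||\eta_n||^2 - ||P\eta_n||^2 \ge \varepsilon_0^2/4$ for $n$ large, so $(s(\eta_n)) \notin (A' \cap \Tilde{M})^{\omega}$, contradicting the hypothesis.

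The main obstacle I expect is the careful bookkeeping needed to pass from the original almost central vectors $\xi_n$ in $\mathcal{K}_n$ to the subtracial, symmetric, almost unit vectors $\eta_n$ in $\mathcal{H}$ while preserving the quantitative lower bound $||\eta_n - P\eta_n|| \ge \varepsilon_0/2$. The convex combinations of Lemma \ref{subtracialapproximation}, the subtracial perturbation of Lemma \ref{subtraciallemma}, and the symmetrization step are each individually mild, but the cumulative effect on the constants must be tracked. Once this is secured, the Fock-space estimate above is a direct adaptation of the argument used in the non-relative case.
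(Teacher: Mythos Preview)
Your approach is correct and essentially the same as the paper's: both run Shlyakhtenko's construction and use that for $w \in A' \cap \tilde{M}$ the $\mathcal{H}$-component of $w \cdot 1_M$ is $A$-central, with the paper phrasing the final estimate as the direct expansion $\|w - s(\xi_n)\|_2^2 = \|w1_M\|^2 + \|\xi_n\|^2 - 2\Re\langle (w1_M)_1,\xi_n\rangle \ge \|(w1_M)_1 - \xi_n\|^2 > \varepsilon$, equivalent to your projection bound. The bookkeeping you flag as the main obstacle can be bypassed entirely: negate the \emph{tracial} formulation of relative property~(T) (the general one, not the II$_1$-factor reformulation), which directly yields tracial---hence subtracial---unit almost-central vectors $\xi_n$ already at distance $>\varepsilon'$ from every $A$-central vector, so Lemma~\ref{subtracialapproximation} is never needed and the constants take care of themselves.
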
  

\begin{proof}
It suffices to show the ``if'' direction. Suppose $A \subset M$ does not have relative property (T), there is a $\varepsilon > 0$ and a $M$-$M$-bimodule $\mathcal{H}$ with $M$-almost central, subtracial, almost unit vectors ($\xi_n$), such that for every $n$ and every non-zero $A$-central vector $\eta$, $||\eta -  \xi_n||^2 > \varepsilon$. Construct $\mathcal{F}_M(\mathcal{H})$ and $\Tilde{M}$ as above, then $(s(\xi_n)) \in M' \cap \Tilde{M}^{\omega}$. 
On the other hand, for any $w \in A'\cap \Tilde{M}$, let $w1_M = (x, \eta, \dots) \in \mathcal{F}_M(\mathcal{H})$ then $\eta \in \mathcal{H}$ is a $A$-central vector. For any $n$, 
\begin{align*}
    ||w - s(\xi_n)||_{2,\mathcal{F}_M(\mathcal{H})}^2= ||w1_M||_{2,\mathcal{F}_M(\mathcal{H})}^2 + || \xi_n||^2 - 2\Re \langle \eta, \xi_n \rangle
    \ge || \eta - \xi_n||^2 > \varepsilon.
\end{align*}
So $(s(\xi_n)) \notin (A' \cap \Tilde{M})^{\omega}$.
\end{proof}

\section{Property (T) and weakly mixing bimodules}\label{sectionnwx}
In this section we prove the analogue of Bekka and Valette's theorem (see Theorem \ref{BV}) for II$_1$ factors stated in Theorem \ref{nwmixingthm1},  reformulated as follows. 
\begin{theorem}\label{nwmixingthm} For a $\mathrm {II}_{1}$ factor $M$, the following are equivalent:
\begin{enumerate}[label=\fbox{\arabic*}]
    \item\label{thm4.1.1} $M$ has property (T);
    \item\label{thm4.1.2} There exists a finite subset $F = F^* \subset \mathcal{U}(M)$ and $\varepsilon > 0$, such that the following holds: for any $M$-$M$-bimodule $\mathcal{H}$, if there exists a unit vector $\xi \in \mathcal{H}$ with $\max_{x\in F} || [x, \xi] || \le \varepsilon$, then there exists a subbimodule $\mathcal{K}$ of $\mathcal{H}$, such that $\dim (\mathcal{K}_M) < \infty$ \emph{or} $\dim ({}_M\mathcal{K}) < \infty$, i.e., $\mathcal{H}$ is \emph{not} both left and right weakly mixing.
\end{enumerate}
\end{theorem}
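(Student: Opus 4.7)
For the direction $\ref{thm4.1.1} \Rightarrow \ref{thm4.1.2}$, I would take $F \subset \mathcal{U}(M)$ finite and $\varepsilon > 0$ from the definition of property (T). Then any $(F, \varepsilon)$-almost central unit vector $\xi$ in an $M$-$M$-bimodule $\mathcal{H}$ produces a non-zero $M$-central vector $\eta$, and the subbimodule $\overline{M \eta M} = \overline{M \eta}$ generated by $\eta$ is cyclic both as a left $M$-module and as a right $M$-module, hence has left and right $M$-dimension at most $1$.

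For the converse $\ref{thm4.1.2} \Rightarrow \ref{thm4.1.1}$, I would argue contrapositively: assuming $M$ does not have property (T), construct, for every prescribed finite $F \subset \mathcal{U}(M)$ and $\varepsilon > 0$, an $M$-$M$-bimodule $\mathcal{H}$ with an $(F, \varepsilon)$-almost central unit vector and no non-zero left-finite or right-finite subbimodule. The plan is to peel off the finite part: start with a bimodule $\mathcal{H}_0$ admitting an $(F, \varepsilon)$-almost central unit vector but no non-zero $M$-central vector (which exists since $M$ fails property (T)), let $\mathcal{H}_0^{\mathrm{fin}}$ be the closed linear span of all left-finite and right-finite subbimodules of $\mathcal{H}_0$, and set $\mathcal{H} := \mathcal{H}_0 \ominus \mathcal{H}_0^{\mathrm{fin}}$. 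By construction $\mathcal{H}$ is both left and right weakly mixing; it remains to ensure that the almost central vectors of $\mathcal{H}_0$ retain substantial projection onto $\mathcal{H}$.

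The heart of the argument is the following intermediate claim: for suitable universal $F_0 \subset \mathcal{U}(M)$ and $\varepsilon_0 > 0$ depending only on $M$, any $M$-$M$-bimodule that is a sum of left-finite or right-finite subbimodules and admits an $(F_0, \varepsilon_0)$-almost central unit vector already contains a non-zero $M$-central vector. Granted this, if the almost central vectors of $\mathcal{H}_0$ concentrated inside $\mathcal{H}_0^{\mathrm{fin}}$, they would produce a non-zero central vector in $\mathcal{H}_0$, contradicting its choice; hence a renormalized projection onto $\mathcal{H}$ yields the desired almost central unit vector in a weakly mixing bimodule. The parameters $(F, \varepsilon)$ and $(F_0, \varepsilon_0)$ must be chosen compatibly so that the contradiction is quantitative.

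The main obstacle is this intermediate claim. A right-finite $M$-$M$-bimodule is encoded by a unital $*$-homomorphism $\pi : M \to p M_n(M) p$ realizing a finite-index inclusion; its central vectors correspond to intertwiners between $\pi$ and the diagonal embedding $m \mapsto m \otimes 1$ inside $p M_n(M) p$, while almost central vectors correspond to such intertwiners in the ultrapower $(p M_n(M) p)^\omega$. When $M$ does not have property Gamma, Lemma \ref{trivalrelcommutant} applied to the finite-index inclusion $\pi(M) \subset p M_n(M) p$ collapses ultrapower intertwiners to genuine ones, and Lemma \ref{nongammasubtracialcpmap} supplies the subtraciality of the associated completely positive maps needed to carry out this reduction. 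The Gamma case is the delicate one, since Lemma \ref{trivalrelcommutant} no longer applies and individual finite bimodules such as $L^2_0(M)$ do admit almost central vectors without central vectors; I expect this to require a more careful choice of $\mathcal{H}_0$ exploiting the central sequences that Gamma provides, so that the almost central mass of $\mathcal{H}_0$ cannot be accounted for by its finite summands.
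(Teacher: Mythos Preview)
Your forward direction is fine. The gap is exactly where you locate it, but the resolution is not a separate treatment of the Gamma case: the paper shows that condition \ref{thm4.1.2} itself forces $M$ to be non-Gamma, so no Gamma case ever arises.

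Concretely, the paper argues forward rather than contrapositively. From \ref{thm4.1.2} it first deduces an intermediate condition \ref{thm4.1.3} (the bimodule is \emph{neither} left nor right weakly mixing, obtained by tensoring a one-sided weakly mixing bimodule with its contragredient), and from \ref{thm4.1.3} it extracts a \emph{uniform non-weakly-mixing} condition on subunital subtracial completely positive maps: there exist finite $F,G\subset M$ and $\varepsilon,\delta>0$ such that every such $\Phi$ with $\max_{x\in F}\|\Phi(x)-x\|_2\le\varepsilon$ satisfies $\sum_{a,b\in G}\|\Phi(aub)\|_2\ge\delta$ for all $u\in\mathcal U(M)$. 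This uniform condition is then shown to fail whenever $M$ has Gamma, via an averaging argument with central unitaries $u_n$ of trace zero: the maps $\frac1n\sum_{i=1}^n\mathrm{Ad}(u_i)$ are pointwise close to the identity yet can be made to violate the lower bound. Thus \ref{thm4.1.2} implies non-Gamma, and your problematic case simply does not occur. Your suggestion of ``a more careful choice of $\mathcal H_0$ exploiting central sequences'' is in the wrong direction; the central sequences are used to \emph{rule out} Gamma under \ref{thm4.1.2}, not to build $\mathcal H_0$.

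There is a second, subtler gap even in the non-Gamma case. Your intermediate claim asks for a single $(F_0,\varepsilon_0)$ that works for \emph{arbitrary} direct sums of finite subbimodules, hence for bifinite bimodules of arbitrarily large index. Lemma \ref{trivalrelcommutant} alone does not give this: it controls the relative commutant of a \emph{fixed} finite-index inclusion, but a sequence of almost-central vectors could live in bimodules ${}_M(\bigoplus_1^{n_i}L^2(M))p_i{}_{\theta_i(M)}$ with $n_i\to\infty$ and no common ambient algebra. The paper handles this with the uniform non-weakly-mixing condition again (in its non-subtracial form \ref{thm4.1.8}, available via Lemma \ref{nongammasubtracialcpmap} once non-Gamma is known): it forces the homomorphisms $\theta_i$ to fall into finitely many unitary equivalence classes, reducing to a single $\theta$, and only then does Lemma \ref{trivalrelcommutant} apply to show the approximate intertwiners converge to a genuine one. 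So the uniform condition is doing essential work that your sketch does not account for.
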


To show that \ref{thm4.1.2} $\implies$ \ref{thm4.1.1}, we will introduce several conditions \ref{thm4.1.3}, \ref{thm4.1.4}, \ref{thm4.1.5}, \ref{thm4.1.6}, \ref{thm4.1.7} and \ref{thm4.1.8}. 
We will show the following implications (in order): 
\ref{thm4.1.2} $\implies$ \ref{thm4.1.3} and \ref{thm4.1.4};
\ref{thm4.1.4} and \ref{thm4.1.5} $\implies$ \ref{thm4.1.1};
\ref{thm4.1.3} $\implies$ \ref{thm4.1.6} and \ref{thm4.1.7}, from which \ref{thm4.1.8} follows;
and finally \ref{thm4.1.7} and \ref{thm4.1.8} $\implies$ \ref{thm4.1.5}. See Figure \ref{fig}.

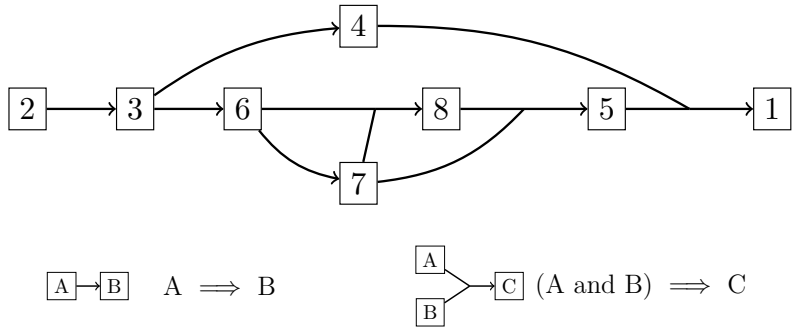
\begin{figure}[h]
\centering
\captionsetup{justification=centering}
\scalebox{1.1}{
\begin{tikzpicture}
\node[shape=rectangle, draw=black] (1) at (4,0) {1};
\node[shape=rectangle, draw=black] (2) at (-5,0) {2};
\node[shape=rectangle, draw=black] (3) at (-3.7,0) {3};
\node[shape=rectangle, draw=black] (4) at (-1,1) {4};
\node[shape=rectangle, draw=black] (5) at (2,0) {5};
\node[shape=rectangle, draw=black] (6) at (-2.4,0) {6};
\node[shape=rectangle, draw=black] (7) at (-1,-0.9) {7};
\node[shape=rectangle, draw=black] (8) at (0,0) {8};
\path[thick,->] (2) edge (3);
\path[thick,->] (3) edge  [bend left=15]  (4);
\path[thick,->] (3) edge (6);
\path[thick,->] (6) edge [bend right=20] (7);
\path[thick,->]  (-0.8,0) edge (8);
\path[thick,->] (1,0) edge (5);
\path[thick,->] (3,0) edge (1);
\path[thick,-] (6) edge  (-0.8,0);
\path[thick,-] (7) edge  (-0.8,0);
\path[thick,-] (8) edge (1,0);
\path[thick,-] (7) edge  [bend right=20] (1,0);
\path[thick,-] (5) edge (3,0);
\path[thick,-] (4) edge  [bend left=15] (3,0);
\end{tikzpicture}
}

\vspace{0.2in}

\scalebox{0.7}{
\begin{tikzpicture}
\node[shape=rectangle, draw=black] (1) at (-3,0) {A};
\node[shape=rectangle, draw=black] (2) at (-2,0) {B};
\node (3) at (-0,0) {\Large  A $\implies$ B};
\node[shape=rectangle, draw=black] (4) at (4,0.5) {A};
\node[shape=rectangle, draw=black] (5) at (4,-0.5) {B};
\node[shape=rectangle, draw=black] (6) at (5.5,0) {C};
\node (7) at (8, 0) {\Large (A and B)  $\implies$ C};
\path[thick,->] (1) edge (2);
\path[thick,-] (4) edge (4.75, 0);
\path[thick,-] (5) edge (4.75, 0);
\path[thick,->] (4.75, 0) edge (6);
\end{tikzpicture}
}
\captionsetup{labelfont=bf}
\caption{Map of the proof of Theorem \ref{nwmixingthm} }
\label{fig}
\end{figure}

We first show that:
\begin{claim} For the following statements, \ref{thm4.1.2} implies \ref{thm4.1.3}, and \ref{thm4.1.3} implies \ref{thm4.1.4}:
\begin{enumerate}[label=\fbox{\arabic*}]
\setcounter{enumi}{2}
    \item\label{thm4.1.3} There exists a finite subset $F = F^* \subset \mathcal{U}(M)$ and $\varepsilon > 0$, 
    such that the following holds: for any $M$-$M$-bimodule $\mathcal{H}$, 
    if there exists a unit vector $\xi \in \mathcal{H}$ with $ \max_{x\in F} || [x, \xi] || \le \varepsilon$, 
    then there exist subbimodules $\mathcal{K}_1$ and $\mathcal{K}_2$  of $\mathcal{H}$, such that $\dim ((\mathcal{K}_1){}_M) < \infty$ and $\dim ({}_M(\mathcal{K}_2) ) < \infty$, 
    i.e., $\mathcal{H}$ is neither left nor right weakly mixing;
    \item\label{thm4.1.4} There exists a finite subset $F = F^* \subset \mathcal{U}(M)$ and $\varepsilon > 0$, 
    such that  for any $M$-$M$-bimodule $\mathcal{H}$, 
    if there exists a unit vector $\xi \in \mathcal{H}$ with $\sum_{x\in F'} || [x, \xi] ||^2 \le \varepsilon'$, 
    where $F'$ is a finite subset of $\mathcal{U}(M)$ containing $F$, and $0< \varepsilon' \le \varepsilon^2$,
    then $\mathcal{H}$ has a bifinite subbimodule $\mathcal{K}$ (i.e., $\dim (\mathcal{K}_M) < \infty$ \emph{and} $\dim ({}_M\mathcal{K}) < \infty$) which also contains a unit vector $\eta$ such that $\sum_{x\in F'} || [x, \eta] ||^2 \le \varepsilon'$.
\end{enumerate}
\end{claim}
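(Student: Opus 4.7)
For \ref{thm4.1.2} $\implies$ \ref{thm4.1.3}, my plan is to exploit the symmetry provided by the contragredient bimodule. Keep the same $F = F^*$ and $\varepsilon$ from \ref{thm4.1.2}. Given $\mathcal{H}$ with an $(F,\varepsilon)$-almost-central unit vector $\xi$, form the symmetric bimodule $\tilde{\mathcal{H}} = \mathcal{H} \oplus \overline{\mathcal{H}}$, which inherits the almost-central unit vector $(\xi,0)$; the identity $\|[x,\overline{\xi}]\| = \|[x^*,\xi]\|$ together with $F = F^*$ shows that $\overline{\mathcal{H}}$ also carries $\overline{\xi}$ with the same bound, and $\tilde{\mathcal{H}}$ is equipped with an antiunitary $J$ swapping summands. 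Apply \ref{thm4.1.2} to $\tilde{\mathcal{H}}$ to obtain a subbimodule $\tilde{\mathcal{K}}$ which, after possibly replacing by $J(\tilde{\mathcal{K}})$, may be taken right-finite; then $J(\tilde{\mathcal{K}})$ is left-finite. Projecting via $P$ onto the first summand yields candidates $\mathcal{K}_1 = P(\tilde{\mathcal{K}}) \subset \mathcal{H}$ (right-finite) and $\mathcal{K}_2 = P(J(\tilde{\mathcal{K}})) \subset \mathcal{H}$ (left-finite, since $P(J(\tilde{\mathcal{K}}))$ agrees with the contragredient of the second-summand projection $Q\tilde{\mathcal{K}} \subset \overline{\mathcal{H}}$). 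The main obstacle is the degenerate case where one of these projections vanishes, i.e.\ $\tilde{\mathcal{K}}$ is contained entirely in a single summand, so that only one type is obtained; the missing type must then be produced by applying \ref{thm4.1.2} separately to $\mathcal{H}$ and to $\overline{\mathcal{H}}$ and carefully sorting the possibilities through contragredients. I expect this case analysis to be the main technical hurdle.

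For \ref{thm4.1.3} $\implies$ \ref{thm4.1.4}, I would set $F_4 = F_3$ and $\varepsilon_4 = \varepsilon_3^2$, so any unit vector satisfying the squared-sum hypothesis of \ref{thm4.1.4} automatically satisfies the max-norm hypothesis of \ref{thm4.1.3}. Let $\mathcal{H}_{\mathrm{bf}} \subset \mathcal{H}$ denote the closed $M$-$M$-subbimodule generated by all bifinite subbimodules, with orthogonal projection $P$. Since $P$ commutes with both $M$-actions, $\sum_{x\in F'}\|[x,P\xi]\|^2 \le \sum_{x\in F'}\|[x,\xi]\|^2 \le \varepsilon'$ is automatic. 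Take $\eta$ to be a normalization of $P\xi$ and $\mathcal{K}$ any bifinite subbimodule of $\mathcal{H}_{\mathrm{bf}}$ containing $P\xi$. The main obstacle is to show that $\|P\xi\|$ is close enough to $1$ that the normalization preserves the $\varepsilon'$ bound — equivalently, that the component $(1-P)\xi$ in $\mathcal{H}_{\mathrm{bf}}^\perp$ is negligible. I would proceed by contradiction: if $(1-P)\xi$ were non-negligible, it would serve as an almost-central vector in $\mathcal{H}_{\mathrm{bf}}^\perp$, and applying \ref{thm4.1.3} there would furnish both a right-finite and a left-finite subbimodule inside $\mathcal{H}_{\mathrm{bf}}^\perp$. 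The technical heart is then to combine these two into a nonzero bifinite subbimodule contained in $\mathcal{H}_{\mathrm{bf}}^\perp$ — via an intersection argument or a Connes-tensor construction — contradicting the very definition of $\mathcal{H}_{\mathrm{bf}}$.
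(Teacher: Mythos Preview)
Both parts of your proposal contain genuine gaps, and in each case the paper's argument uses a different idea that you are missing.

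\textbf{For \ref{thm4.1.2} $\Rightarrow$ \ref{thm4.1.3}.} Your direct construction cannot be completed. Condition \ref{thm4.1.2} only asserts the existence of \emph{some} subbimodule of one of the two types, with no control over which one. Applying \ref{thm4.1.2} to $\mathcal{H}$, to $\overline{\mathcal{H}}$, or to $\mathcal{H}\oplus\overline{\mathcal{H}}$ may each time return a right-finite subbimodule; passing through contragredients converts a right-finite subbimodule of $\overline{\mathcal{H}}$ into a left-finite subbimodule of $\mathcal{H}$, but if \ref{thm4.1.2} applied to $\overline{\mathcal{H}}$ yields a \emph{left}-finite subbimodule, the contragredient is again right-finite in $\mathcal{H}$ and you are stuck. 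Your ``case analysis'' therefore has irreducible bad cases. The paper argues by contrapositive: if \ref{thm4.1.3} fails, then for every $(F,\varepsilon)$ there is a bimodule with an $(F,\varepsilon)$-almost central unit vector that is (after possibly replacing by its contragredient) left weakly mixing. Taking direct sums and passing to subtracial almost-central vectors via Lemma~\ref{subtracialapproximation}, one obtains such an $\mathcal{H}$ and then forms the Connes tensor product $\mathcal{H}\otimes_M\overline{\mathcal{H}}$, which is \emph{both} left and right weakly mixing and still carries almost-central unit vectors $\xi_n\otimes\overline{\xi_n}$. This contradicts \ref{thm4.1.2}. The tensor-product step is the key idea you are missing.

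\textbf{For \ref{thm4.1.3} $\Rightarrow$ \ref{thm4.1.4}.} Your plan has three problems. First, even if $P\xi=\xi$, the vector $\xi$ lies only in the \emph{closed span} $\mathcal{H}_{\mathrm{bf}}$ and need not sit inside any single bifinite subbimodule, so your choice of $\mathcal{K}$ is undefined. Second, if $\|P\xi\|<1$ then normalizing strictly inflates $\sum_{x\in F'}\|[x,\eta]\|^2$, and \ref{thm4.1.4} demands the \emph{same} bound $\varepsilon'$, not an approximate one. Third, and most seriously, the step ``combine a right-finite $\mathcal{K}_1$ and a left-finite $\mathcal{K}_2$ inside $\mathcal{H}_{\mathrm{bf}}^\perp$ into a nonzero bifinite subbimodule'' is false in general: one can have $\mathcal{H}_{\mathrm{bf}}^\perp=\mathcal{K}_1\oplus\mathcal{K}_2$ with $\mathcal{K}_1$ irreducible right-finite of infinite left dimension and $\mathcal{K}_2$ irreducible left-finite of infinite right dimension, and then there is no nonzero bifinite subbimodule at all. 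Neither intersection nor Connes tensor produces a subbimodule of $\mathcal{H}_{\mathrm{bf}}^\perp$. The paper instead uses a two-stage orthogonal decomposition that tracks the almost-central vector: write $\mathcal{H}=\mathcal{H}_0\oplus\bigoplus_{i\ge1}\mathcal{H}_i$ with $\mathcal{H}_0$ left weakly mixing and each $\mathcal{H}_i$ right-finite, and decompose $\xi=\sum_i\lambda_i\xi_i$ accordingly. Since $\mathcal{H}_0$ is left weakly mixing, \ref{thm4.1.3} forces $\sum_{x\in F'}\|[x,\xi_0]\|^2>\varepsilon'$ (or $\lambda_0=0$); a convexity argument then produces some $i_0\ge1$ with $\sum_{x\in F'}\|[x,\xi_{i_0}]\|^2\le\varepsilon'$. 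Repeating the same decomposition inside the right-finite $\mathcal{H}_{i_0}$ (now splitting off the right weakly mixing part and left-finite pieces) yields a bifinite $\mathcal{K}\subset\mathcal{H}_{i_0}$ with a unit vector satisfying the same bound. The point is that the almost-central vector is carried along at each stage via averaging over an orthogonal decomposition, which is exactly what your projection-and-normalize approach cannot do.
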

\begin{proof}
First we show \ref{thm4.1.2} $\implies$ \ref{thm4.1.3}. Assume \ref{thm4.1.3} is false. Then for any $(F,\varepsilon)$ where $F = F^* \subset \mathcal{U}(M)$ is a finite subset, and $\varepsilon > 0$, 
there exists a $M$-$M$-bimodule $\mathcal{K}$ with $(F,\varepsilon)$-almost central unit vector which is left or right weakly mixing. 
In the case of $\mathcal{K}$ being right weakly mixing, we replace it with $\overline{\mathcal{K}}$, 
so we may assume $\mathcal{K}$ is always left weakly mixing. 
Then taking direct sums,
there exists a left weakly mixing $M$-$M$-bimodule with almost central unit vectors, 
and a left weakly mixing $M$-$M$-bimodule $\mathcal{H}$ with almost central, 
subtracial, almost unit vectors $(\xi_n)$ by subtracial approximation (Lemma \ref{subtracialapproximation}). 
Taking the Connes tensor product of $\mathcal{H}$ with its contragredient, 
we have a left and right weakly mixing $M$-$M$-bimodule $\mathcal{H} \otimes_M \overline{\mathcal{H}}$, 
and for $\xi_n \otimes \overline{\xi_n} \in \mathcal{H} \otimes_M \overline{\mathcal{H}}$ we have $\lim ||\xi_n \otimes \overline{\xi_n}|| =1$:
\begin{align*}
     &||\xi_n \otimes \overline{\xi_n} || = \langle \overline{\xi_n}, \langle \xi_n,\xi_n \rangle_M \overline{\xi_n} \rangle^{\frac{1}{2}} = \tau_M(\langle \xi_n,\xi_n \rangle_M^2)^{\frac{1}{2}} = || \langle \xi_n,\xi_n \rangle_M ||_2 \ge \tau_M(\langle \xi_n,\xi_n \rangle_M) = ||\xi_n||^2 \to1,\\
     &|| \langle \xi_n,\xi_n \rangle_M ||_2 \le || \langle \xi_n,\xi_n \rangle_M ||_{\infty} \le 1,
\end{align*}
since $\tau(\langle \xi_n,\xi_n \rangle_M  x ) \le \tau(x)$ for all $x \in M_+$.
These are almost central vectors since 
\[
||x \xi_n \otimes \overline{\xi_n}  - \xi_n \otimes \overline{\xi_n} x|| \le ||(x \xi_n  - \xi_n x)\otimes \overline{\xi_n}|| + || \xi_n \otimes (x \overline{\xi_n}  - \overline{\xi_n} x)||,
\]
and $\xi_n$'s are subtracial,
\begin{align*}
     ||(x \xi_n  - \xi_n x)\otimes \overline{\xi_n}||^2 
     &= \big\langle \overline{\xi_n},  \langle x \xi_n  - \xi_n x,x \xi_n  - \xi_n x  \rangle_M \overline{\xi_n} \big\rangle 
     =  \big\langle \xi_n, \xi_n  \langle x \xi_n  - \xi_n x,x \xi_n  - \xi_n x  \rangle_M  \big\rangle \\
     &\le \tau_M \big( \langle x \xi_n  - \xi_n x,x \xi_n  - \xi_n x  \rangle_M  \big) = || x \xi_n  - \xi_n x ||^2,\\
     ||\xi_n\otimes (x\overline{\xi_n} - \overline{\xi_n}x)||^2 
     &=   \big\langle \xi_n   ({}_M\langle x\overline{\xi_n} - \overline{\xi_n}x,x\overline{\xi_n} - \overline{\xi_n}x \rangle ), \xi_n   \big\rangle\\
     &\le \tau_M \big({}_M\langle x\overline{\xi_n} - \overline{\xi_n}x,x\overline{\xi_n} - \overline{\xi_n}x \rangle \big) = || x^* \xi_n  - \xi_n x^* ||^2
\end{align*}
 for any $x \in M$.
Therefore $\mathcal{H} \otimes_M \overline{\mathcal{H}}$ is a left and right weakly mixing $M$-$M$-bimodule with almost central unit vectors, contradicting \ref{thm4.1.2}.

Assume \ref{thm4.1.3} holds and let us prove \ref{thm4.1.4}. Let $(F, \varepsilon)$ be as in \ref{thm4.1.3} and $\mathcal{H}$ be an $M$-$M$-bimodule. 
We have the decomposition $\mathcal{H} = \mathcal{H}_0 \bigoplus (\mathop{\oplus}_{\substack{i=1}}^{\infty}\mathcal{H}_i) $ where $\mathcal{H}_0$ is left weakly mixing and the rest $\mathcal{H}_i$'s are right $M$-finite dimensional.
Suppose $F'$ is a finite subset of $\mathcal{U}(M)$ containing $F$, and $0< \varepsilon' < \varepsilon^2$,
and $\xi \in \mathcal{H}$ is a unit vector satisfying $\sum_{x \in F'} || x\xi  -\xi  x||^2 < \varepsilon'$. 
Write $\xi = \sum_{i=0}^{\infty} \lambda_i \xi_i$ where $\xi_i \in \mathcal{H}_i$, $|| \xi_i|| = 1 $ for all $i$, and $\sum_{i=0}^{\infty}|\lambda_i|^2 = || \xi||_2^2=1$.
By \ref{thm4.1.3}, $\xi_0 \in \mathcal{H}_0$ cannot be a $(F, \varepsilon)$-almost central unit vector.
Then $\sum_{x \in F'} || x\xi -\xi x||^2  =  \sum_{x \in F'} \sum_{i=0}^{\infty} |\lambda_i|^2 || x\xi_i -\xi_i x||^2 < \varepsilon' $ implies
 $\sum_{x \in F'} || x\xi_{i_0} -\xi_{i_0} x||^2 < \varepsilon'$ for some $i_0\neq 0$. 
Applying the same argument to the direct sum decomposition of $\mathcal{H}_{i_0}$ into a right weakly mixing subbimodule and left $M$-finite dimensional subbimodules, 
we get a left $M$-finite dimensional subbimodule $\mathcal{K}$ of $\mathcal{H}_{i_0}$ such that $\mathcal{K} $ contains a unit vector $\eta$ 
with $\sum_{x \in F'} || x\eta -\eta x||^2 < \varepsilon'$, 
and $\mathcal{K} $ is the desired bifinite subbimodule of $\mathcal{H} $. 
\end{proof}

Let $\mathcal{K} $ be a bifinite $M$-$M$-subbimodule.
Suppose $\mathcal{K} \cong {}_M(\bigoplus\limits_{1}^n{L^2(M)})p_{\theta(M)} $ where $p \in \mathbb{M}_n (M)$ is a projection 
and $M$ acts on the right via the normal unital *-homomorphism $\theta: M  \xrightarrow{} p \mathbb{M}_n (M) p$. 
Since $\dim ({}_M\mathcal{K}) =  \mathrm{Tr} \otimes \tau (p) < \infty$ and $\dim (\mathcal{K}_M) = \frac{[p \mathbb{M}_n (M) p\textbf{ : } \theta(M)]}{\mathrm{Tr} \otimes \tau (p)} < \infty$,
we have that $\theta(M)' \cap p \mathbb{M}_n (M) p$ is finite dimensional (see \cite{Jo83}, Corollary 2.2.3).
Let $\{p_i\}_{i = 1}^k$ be a partition of unity consisting of minimal projections in $\theta(M)' \cap p \mathbb{M}_n (M) p$, then $\mathcal{K} = \bigoplus\limits_{1}^k \mathcal{K}_i $ where
each $\mathcal{K}_i = (\bigoplus\limits_{1}^n{L^2(M)})p_i$ is a subbimodule corresponding to the normal unital *-homomorphism $\theta_i: M  \xrightarrow{} p_i \mathbb{M}_n (M) p_i$, $\theta_i(x) = p_i\theta(x)$ for each $i \in \{1,\dots,k\}$.
Then $\theta_i(M)' \cap p_i \mathbb{M}_n (M) p_i = \mathbb{C}p_i$, 
and $\mathcal{K} $ having a unit vector $\xi$ such that $\sum_{x \in F} || x\xi -\xi x||^2 < \varepsilon$ for some $\varepsilon > 0$
implies that for some $i$, 
$\mathcal{K}_i $ has a unit vector $\eta$ such that $\sum_{x \in F} || x\eta -\eta x||^2 < \varepsilon$. Notice $\eta$ is a $(F, \varepsilon^{\frac{1}{2}})$-almost central unit vector.

Therefore, to prove that \ref{thm4.1.2} implies \ref{thm4.1.1}, it suffices to show assuming \ref{thm4.1.4}, the following condition \ref{thm4.1.5} holds.

\begin{claim}
For a II$_1$ factor $M$, if \ref{thm4.1.4} and the following condition \ref{thm4.1.5} hold, then $M$ has property (T):
\begin{enumerate}[label=\fbox{\arabic*}]
\setcounter{enumi}{4}
    \item\label{thm4.1.5} There exist a finite subset $F = F^* \subset \mathcal{U}(M)$ and $\varepsilon > 0$, such that for any $M$-$M$-bifinite bimodule $\mathcal{H}$ that is isomorphic to  ${}_M(\bigoplus\limits_{1}^n{L^2(M)})p_{\theta(M)}$, where $p \in \mathbb{M}_n (M)$  and the normal unital  *-homomorphism $\theta: M  \xrightarrow{} p \mathbb{M}_n (M) p$ satisfies $\theta(M)' \cap p \mathbb{M}_n (M) p = \mathbb{C}p$, $\mathcal{H}$ having a $(F, \varepsilon)$-almost central unit vector implies that it has a nonzero central vector.
\end{enumerate} 
\end{claim}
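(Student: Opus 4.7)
The plan is to combine \ref{thm4.1.4} with \ref{thm4.1.5} to produce a pair $(F,\varepsilon)$ witnessing property (T) of $M$ via the equivalent formulation (2) recalled in Section \ref{prelim}. Let $(F_4,\varepsilon_4)$ be the pair furnished by \ref{thm4.1.4} and $(F_5,\varepsilon_5)$ the pair furnished by \ref{thm4.1.5}. I would set $F := F_4 \cup F_5 \subset \mathcal{U}(M)$ and $\varepsilon := \min(\varepsilon_4,\varepsilon_5)/\sqrt{|F|}$; the goal is to show that every $M$-$M$-bimodule with a $(F,\varepsilon)$-almost central unit vector admits a non-zero $M$-central vector.

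Given such an $\mathcal{H}$ with unit vector $\xi$, the first step is to pass to sum-of-squares form: $\sum_{x\in F}\|[x,\xi]\|^2 \le |F|\varepsilon^2 \le \varepsilon_4^2$. Applying \ref{thm4.1.4} with $F' = F$ and $\varepsilon' = |F|\varepsilon^2$ then yields a bifinite $M$-$M$-subbimodule $\mathcal{K} \subset \mathcal{H}$ containing a unit vector $\eta$ with $\sum_{x\in F}\|[x,\eta]\|^2 \le |F|\varepsilon^2$. Using the direct-sum decomposition of bifinite bimodules recalled in the paragraph preceding \ref{thm4.1.5}, I would write $\mathcal{K} = \bigoplus_{i=1}^k \mathcal{K}_i$, where each $\mathcal{K}_i$ corresponds to a normal unital $*$-homomorphism $\theta_i:M \to p_i\mathbb{M}_n(M)p_i$ satisfying $\theta_i(M)' \cap p_i\mathbb{M}_n(M)p_i = \mathbb{C}p_i$. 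Decomposing $\eta = \sum_i \eta_i$ orthogonally with $\eta_i \in \mathcal{K}_i$, the subbimodule property of each $\mathcal{K}_i$ gives $[x,\eta_i]\in\mathcal{K}_i$, hence $\sum_{x\in F}\|[x,\eta]\|^2 = \sum_i\sum_{x\in F}\|[x,\eta_i]\|^2$, while $\sum_i\|\eta_i\|^2 = 1$.

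The crucial step is then a weighted-average observation. Setting $r_i := \|\eta_i\|^{-2}\sum_{x\in F}\|[x,\eta_i]\|^2$ for those $i$ with $\eta_i\neq 0$, the identity $\sum_i\|\eta_i\|^2 r_i = \sum_i\sum_{x\in F}\|[x,\eta_i]\|^2 \le |F|\varepsilon^2$ exhibits $|F|\varepsilon^2$ as an upper bound on a convex combination of the $r_i$; hence some $r_{i_0} \le |F|\varepsilon^2$. Then $\tilde\eta_{i_0} := \eta_{i_0}/\|\eta_{i_0}\|$ is a unit vector in $\mathcal{K}_{i_0}$ with $\max_{x\in F}\|[x,\tilde\eta_{i_0}]\| \le \sqrt{|F|}\,\varepsilon \le \varepsilon_5$. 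Since $\mathcal{K}_{i_0}$ has exactly the form required by \ref{thm4.1.5}, applying \ref{thm4.1.5} produces a non-zero $M$-central vector in $\mathcal{K}_{i_0} \subset \mathcal{H}$, completing the argument.

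The main subtlety to watch for is that the number $k$ of minimal pieces in the bifinite decomposition is not a priori controlled, so a crude pigeonhole estimate $\max_i\|\eta_i\|^2 \ge 1/k$ would not deliver an almost central vector in a single piece: a component with very small norm may have large normalized deviation from centrality. The weighted-average identity above makes the estimate dimension-free in $k$, which is precisely what allows \ref{thm4.1.4} and \ref{thm4.1.5} to be chained together. Apart from this, the proof is bookkeeping with the parameters and a sequential application of the two hypotheses.
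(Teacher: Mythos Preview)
Your argument is correct and essentially identical to the paper's: the paper sets $F = F_1 \cup F_2$ and the max-to-sum threshold $\frac{\min\{\varepsilon_1^2,\varepsilon_2^2\}}{|F_1\cup F_2|}$, applies \ref{thm4.1.4}, then invokes the same weighted-average decomposition of the resulting bifinite bimodule into irreducible pieces (which the paper records in the paragraph immediately preceding the claim) before applying \ref{thm4.1.5}. Your ``crucial step'' is exactly that paragraph's observation that $\sum_{x\in F}\|[x,\xi]\|^2 < \varepsilon$ passes to some irreducible summand.
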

\begin{proof}
Let $F_1$, $\varepsilon_1$ be $F$, $\varepsilon$ in \ref{thm4.1.4}, and $F_2$, $\varepsilon_2$ be $F$, $\varepsilon$ in \ref{thm4.1.5}.  
If $\mathcal{H}$ is a
$M$-$M$-bimodule with a unit vector $\xi$ satisfying $\max_{x\in  F_1 \cup F_2} || [x, \xi] ||^2 \le \frac{\min \{ \varepsilon_1^2 , \varepsilon_2^2    \}}{| F_1 \cup F_2|}$, 
then $\sum_{x\in F_1 \cup F_2} || [x, \xi] ||^2 \le \min \{ \varepsilon_1^2 , \varepsilon_2^2     \}$. 
By \ref{thm4.1.4} and the discussion above,
there is a $M$-$M$-subbimodule $\mathcal{K}$ of $\mathcal{H}$,
such that $K  \cong {}_M(\bigoplus\limits_{1}^n{L^2(M)})p_{\theta(M)}$, 
where $p \in \mathbb{M}_n (M)$  and the normal unital  *-homomorphism $\theta: M  \xrightarrow{} p \mathbb{M}_n (M) p$ satisfies $\theta(M)' \cap p \mathbb{M}_n (M) p = \mathbb{C}p$,
and $\mathcal{K}$ contains a $(F_1 \cup F_2, \min \{ \varepsilon_1 , \varepsilon_2   \} )$-almost central unit vector. By \ref{thm4.1.5} $\mathcal{K}$ has a nonzero central vector.
\end{proof}

Before proceeding to the proof of \ref{thm4.1.5}, we need one more step to show that $M$ does not have property Gamma and that the completely positive maps on $M$ converging pointwise to the identity on a critical set satisfy a uniform non weakly mixing condition.
\begin{claim}
 \ref{thm4.1.3} implies the following two conditions:
\begin{enumerate}[label=\fbox{\arabic*}]
  \setcounter{enumi}{5}
  \item\label{thm4.1.6} (uniform non weakly mixing) There exist finite subsets $F= F^* \subset M , G \subset M$, 
  and $\varepsilon, \delta >0$, such that if $\Phi:M \to M$ is a subunital subtracial completely positive map, with $|| \Phi(x) - x ||_2 \le \varepsilon$ for any $x \in F$, then for any  $u \in \mathcal{U}(M)$, $\sum_{a,b \in G} || \Phi(aub) ||_2 \ge \delta$; 
  \item\label{thm4.1.7} $M$ does not have property Gamma.
\end{enumerate}
With \ref{thm4.1.7} we can remove the subtracial condition in \ref{thm4.1.6}. That is, \ref{thm4.1.6} and \ref{thm4.1.7} imply the following:
\begin{enumerate}[label=\fbox{\arabic*}]
  \setcounter{enumi}{7}
  \item\label{thm4.1.8} (uniform non weakly mixing without subtracial condition) There exist finite subsets $F= F^* \subset M , G \subset M$, and $\varepsilon, \delta >0$, such that if $\Phi:M  \to M$ is a subunital normal completely positive map, with $|| \Phi(x) - x ||_2 \le \varepsilon$ for any $x \in F$, then for any  $u \in \mathcal{U}(M)$, $\sum_{a,b \in G} || \Phi(aub) ||_2 \ge \delta$. 
\end{enumerate}
\end{claim}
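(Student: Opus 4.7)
The plan is to use Lemma \ref{nongammasubtracialcpmap} as a black box: it converts an arbitrary subunital normal completely positive map into a subtracial one that is uniformly close on the whole unit ball, provided $M$ lacks property Gamma. Once we have this conversion, we can transfer \ref{thm4.1.6} from subtracial maps to arbitrary subunital ones by a triangle inequality.

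In detail, let $(F_6, G, \varepsilon_6, \delta)$ witness \ref{thm4.1.6}, and set $C := \max_{a,b\in G} \|a\|_\infty \|b\|_\infty$ so that $\|aub\|_\infty \le C$ for every $u \in \mathcal{U}(M)$. Pick a slack parameter
\[
\varepsilon' := \min\!\Big\{ \tfrac{\delta}{2 C |G|^2},\; \tfrac{\varepsilon_6}{2(1+\max_{x \in F_6}\|x\|_\infty)} \Big\}.
\]
By \ref{thm4.1.7} and Lemma \ref{nongammasubtracialcpmap} applied with this $\varepsilon'$, there exist a finite $F_\Gamma \subset \mathcal{U}(M)$ and $\varepsilon_\Gamma > 0$ such that every subunital normal c.p.\ map $\Phi : M \to M$ with $\max_{x \in F_\Gamma} \|\Phi(x) - x\|_2 \le \varepsilon_\Gamma$ admits a subunital subtracial normal c.p.\ map $\Psi$ satisfying $\|\Psi(x) - \Phi(x)\|_2 \le \varepsilon'$ for all $x \in (M)_1$.

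Now set $F := F_6 \cup F_\Gamma$ and $\varepsilon := \min\{\varepsilon_\Gamma,\; \varepsilon_6/2\}$. Given a subunital normal c.p.\ map $\Phi$ with $\|\Phi(x) - x\|_2 \le \varepsilon$ for all $x \in F$, produce $\Psi$ as above. For $x \in F_6$, by scaling to $(M)_1$ and the triangle inequality,
\[
\|\Psi(x) - x\|_2 \le \|x\|_\infty \varepsilon' + \|\Phi(x) - x\|_2 \le \tfrac{\varepsilon_6}{2} + \tfrac{\varepsilon_6}{2} = \varepsilon_6,
\]
so \ref{thm4.1.6} applies to $\Psi$, yielding $\sum_{a,b \in G} \|\Psi(aub)\|_2 \ge \delta$ for every $u \in \mathcal{U}(M)$. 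Since $aub/C \in (M)_1$, we have $\|\Psi(aub) - \Phi(aub)\|_2 \le C\varepsilon'$, whence
\[
\sum_{a,b \in G} \|\Phi(aub)\|_2 \;\ge\; \sum_{a,b \in G} \|\Psi(aub)\|_2 \;-\; |G|^2 C \varepsilon' \;\ge\; \delta - \tfrac{\delta}{2} \;=\; \tfrac{\delta}{2}.
\]
Thus $\bigl(F,\, G,\, \varepsilon,\, \delta/2\bigr)$ witnesses \ref{thm4.1.8}.

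The main conceptual obstacle was already handled in Lemma \ref{nongammasubtracialcpmap}: without property Gamma, one can use spectral gap of $M \subset M$ to approximate the Radon-Nikodým derivative of $\Phi$ (with respect to $\tau$) by a scalar, cutting down by $b = f(a)$ to make $\Psi$ subtracial. The step above is then essentially bookkeeping: choosing $\varepsilon'$ small enough that the perturbation $\Psi \rightsquigarrow \Phi$ does not destroy the lower bound $\delta$ from the non weakly mixing condition, enlarging the critical set from $F_6$ to $F_6 \cup F_\Gamma$, and shrinking $\varepsilon$ accordingly.
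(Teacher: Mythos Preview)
The claim has three separate implications: \ref{thm4.1.3} $\Rightarrow$ \ref{thm4.1.6}, \ref{thm4.1.6} $\Rightarrow$ \ref{thm4.1.7}, and (\ref{thm4.1.6} and \ref{thm4.1.7}) $\Rightarrow$ \ref{thm4.1.8}. Your proposal addresses only the third of these. For that part it is correct and essentially identical to the paper's argument: the same invocation of Lemma \ref{nongammasubtracialcpmap} followed by triangle-inequality bookkeeping, with only cosmetic differences in how the constants are packaged (and a minor omission that $F$ should be symmetrized to $F \cup F^*$ to match the form $F = F^*$ stated in \ref{thm4.1.8}).

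However, you have not touched the two substantive implications, which carry essentially all the content of the claim. The proof that \ref{thm4.1.3} $\Rightarrow$ \ref{thm4.1.6} is the hardest part: assuming \ref{thm4.1.6} fails, the paper recursively chooses subunital subtracial c.p.\ maps $\Phi_n$ and unitaries $u_n$ (with carefully nested critical sets $F_n$, $G_n$ and summable errors $\varepsilon_n$), forms the pointwise limit $\psi = \lim_n \Phi_1 \circ \cdots \circ \Phi_n$, and shows that the associated bimodule $\mathcal{H}_\psi$ is left weakly mixing yet carries an $(F,\varepsilon)$-almost central unit vector, contradicting \ref{thm4.1.3}. The proof that \ref{thm4.1.6} $\Rightarrow$ \ref{thm4.1.7} is also nontrivial: from a trace-zero central sequence of unitaries one extracts a subsequence $(u_i)$ with controlled off-diagonal inner products, applies a convexity/minimal-norm argument in $\bigoplus L^2(M)$ to produce a unitary $u$ for which the average $\phi = \frac{1}{n}\sum_i \mathrm{Ad}(u_i)$ violates the lower bound $\sum_{a,b}\|\phi(aub)\|_2 \ge \delta$ of \ref{thm4.1.6}. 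Neither of these ideas appears in your proposal, so as a proof of the full claim it is incomplete.
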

\begin{proof}
Assuming \ref{thm4.1.6} fails, for every finite subsets $F = F^*, G \subset M$ and every $\varepsilon, \delta >0$, there exists a subunital subtracial completely positive map, with $|| \Phi(x) - x ||_2 \le \varepsilon$ for any $x \in F$, and  $\sum_{a,b \in G} || \Phi(aub) ||_2 < \delta$ for some  $u \in \mathcal{U}(M)$.

 Fix any finite subset $F\subset \mathcal{U}(M)$ containing $1$, and $1> \varepsilon >0$. 
 Suppose \ref{thm4.1.6} fails, 
 take $F_1 = F \subset M$, a finite subset $G_1 \subset M$ and 
 $0< \varepsilon_1<\varepsilon^2/4$.
Then there exists $\Phi_1: M \xrightarrow{} M$, a subunital subtracial completely positive map, such that $|| \Phi_1(x) - x ||_2 \le \varepsilon_1$ for any $x \in F_1$, and  $\sum_{a,b \in G_1} || \Phi_1(a u_1 b) ||_2 < \varepsilon_1$ for some  $u_1 \in \mathcal{U}(M)$. 
For any $n \in \mathbb{N}$, suppose $F_n$, $G_n$ and $\varepsilon_n$ have been fixed, 
such that there exists a subunital subtracial
completely positive map $\Phi_n: M \to M$, with $|| \Phi_n(x) - x ||_2 \le  \frac{\varepsilon_n}{|G_n|^2} \le \varepsilon_n$ for any 
$x \in F_n$, and there exists some $u_n \in \mathcal{U}(M)$ 
such that $\sum_{a,b \in G_n} || \Phi_n(a u_n b) ||_2 < \varepsilon_n$.
Choose $F = F_1 \subset F_2 \subset \cdots$, $G_1 \subset G_2 \subset \cdots$ and the positive sequence $(\varepsilon_n)$ recursively, such that 
$G_n u_n G_n \subset F_{n+1}$, $\cup_n F_n$, $\cup_n G_n$ are $||\cdot||_2-$dense in $M$ and $\sum_n \varepsilon_n < \varepsilon^2/4$.
Let $\psi_n = \Phi_1 \circ \Phi_2 \circ \dotsi \circ \Phi_n$. Then for any $x \in F_{n+1}$,
\begin{align*}
    ||\psi_n(x) - \psi_{n+1}(x)||_2 = ||\Phi_1 \circ  \dotsi \circ \Phi_n(x - \Phi_{n+1}(x))||_2 \le \varepsilon_{n+1}.
\end{align*}
Then $(\psi_n(x))_n$ is $||\cdot||_2$-Cauchy for every $x \in \cup_n F_n$, and every $x \in M$ by density of $ \cup_n F_n$. 
So we can define $\psi(x) = \lim_n \psi_n(x)$ for $x \in M$. 
It is a subunital subtracial completely positive map since each $\psi_n$ is subunital subtracial, and $\psi_n(x) \to \psi(x)$ in $||\cdot||_2$-norm and in strong operator topology. 
Meanwhile $\psi$ satisfies the following:\\
1. $||\psi(x)-x||_2 \le \varepsilon^2/4$ for every $x \in F= F_1$ since for every $x\in F_1$,
\begin{align*}
    ||\psi_n(x)-x||_2 &= ||\Phi_1 \circ  \dotsi \circ \Phi_n(x) - x ||_2 \\
    &\le || \Phi_1(x) -x||_2 + ||\Phi_1( \Phi_2(x) -x) ||_2+\dots +|| \psi_{n-1}(\Phi_n(x) -x)|| \\
    &\le \sum_{i=1}^n \varepsilon_i.
\end{align*}
2. The cyclic $M$-$M$-bimodule $(\mathcal{H}_{\psi}, \xi)$ corresponding to the completely positive map $\psi$ is left weakly mixing.

To see 2, we have that for any $m > n$,
\begin{align*}
   \sum_{a,b\in G_n}|| (\psi_m - \psi_n) (a u_n b)||_2 
   =& \sum_{a,b\in G_n}|| \psi_n(\Phi_{n+1}\circ \dotsi \circ \Phi_m (a u_n b) -a u_n b ) ||_2 \\
    \le&  \sum_{a,b\in G_n} \big(||\Phi_{n+1}(a u_n b) - a u_n b ||_2 + ||\Phi_{n+1} (\Phi_{n+2}(a u_n b) - a u_n b)   ||_2  +\cdots \\
     &+   ||\Phi_{n+1}  \circ \dotsi \circ \Phi_{m-1}(\Phi_{m}(a u_n b) - a u_n b)   ||_2 \big) \\
    \le& \sum_{i=n+1}^m \varepsilon_i .
\end{align*}
 So $\sum_{a,b\in G_n}|| \psi(a u_n b)||_2 \le  \sum_{a,b\in G_n}|| (\psi  - \psi_n) (a u_n b)||_2 + \sum_{a,b\in G_n}|| \Phi_n (a u_n b)||_2
 \le \sum_{i=n}^{\infty} \varepsilon_i$. 
 Fix any $a',b' \in M$ and $\varepsilon'$, for any large enough $n$ there are $a , b \in G_n$ such that $||a - a'||_2, ||b - b'||_2 \le \frac{\varepsilon'}{2(|| a'||_{\infty} + || b'||_{\infty} )}$ and $|| \psi(a  u_n b ) ||_2 \le \sum_{i=n}^{\infty} \varepsilon_i \le \frac{\varepsilon'}{2} $ 
 (if it holds for some $n_0$ then it holds for any $n \ge n_0$). 
 So $|| \psi(a' u_n b') ||_2 \le || \psi(a  u_n b ) ||_2 +  ||a' u_n b' - a u_n b ||_2 < \varepsilon'$,  $\lim || \psi(a' u_n b') ||_2 = 0$. 
 Now the sequence of unitaries $(u_n)_n$ satisfies 
 $\lim_n \sup_{b \in (N)_1} |\langle u_n x_1 \otimes y_1 b , x_2 \otimes y_2  \rangle_{\mathcal{H}_{\psi}}| = 0$ for all $x_1 \otimes y_1, x_2 \otimes y_2 \in M \odot M \subset \mathcal{H}_{\psi}$:
 \begin{align*}
     |\langle u_n x_1 \otimes y_1 b , x_2 \otimes y_2  \rangle_{\mathcal{H}_{\psi}}| = |\tau_M (\psi(x_1^* u_n^* x_2) y_2 b^* y_1^*)| \le ||  y_2 b^* y_1^* ||_{\infty}  || \psi(x_1^* u_n^* x_2) ||_2,
 \end{align*}
 which means that $\mathcal{H}_{\psi}$ is left weakly mixing.
 
Since $\psi$ is subunital and subtracial, from 1 we have $|| x\xi - \xi x ||_2 \le \varepsilon/\sqrt{2} $ for any $x \in F \subset \mathcal{U}(M)$:
\begin{align*}
    || x\xi - \xi x || ^2 &= \tau_M(\psi(x^* x))  +  \tau_M(\psi(1)x x^*) - \tau_M(\psi(x^*) x) - \tau_M(\psi(x) x^*) \\
    &\le   \tau_M(x^* x) + \tau_M(x x^*) - \tau_M(\psi(x^*) x) - \tau_M(\psi(x) x^*) \\
    & = \tau_M ((x^* -\psi(x^*))  x ) +  \tau_M ((x -\psi(x))  x^* ) \\
    &\le 2 ||x -\psi(x)||_2 ||x||_2 
    \le \frac{\varepsilon^2}{2}.
\end{align*}
Since $1 \in F$ then we have,
\begin{align*}
    &||\xi||^2  =  ||\psi(1) ||_1 = 1 - || \psi(1) - 1 ||_1 \ge1 - || \psi(1) - 1 ||_2 \ge 1 - \frac{\varepsilon^2}{4},\\
     &|| x\frac{\xi}{||\xi||} - \frac{\xi}{||\xi||} x || ^2 = \frac{1}{||\xi||^2} || x\xi - \xi x || ^2  \le \frac{\varepsilon^2}{2}(1 - \frac{\varepsilon^2}{4})^{-1} < \varepsilon^2
\end{align*}
for any $x \in F$.
We have a $(F, \varepsilon)$-almost central non-zero unit vector $\frac{\xi}{||\xi||}$ in a left weakly mixing bimodule $\mathcal{H}_{\psi}$,
which contradicts \ref{thm4.1.3}.\\

Now we show \ref{thm4.1.6} $\implies$ \ref{thm4.1.7}. 
Suppose $M$ has property Gamma, then there exists a central sequence $(v_n)$ in $\mathcal{U}(M)$, 
such that $\tau(v_n) = 0$ for every $n$. Since $(L^2(M))_1$ is weakly compact, 
suppose $v_{n_k} \xrightarrow{} x $ weakly. Then $x \in \mathcal{Z}(M)$ and $\tau(x) = 0$, so $x = 0$. 
Therefore for any finite subset $F \subset M$, $\varepsilon >0$, $|\langle  y, v_{n_k} \rangle| < \varepsilon$ for any $y \in F$ and large enough $k$. 
Let $\delta > 0$ and the set $G$ be from \ref{thm4.1.6}, and $\varepsilon_1 =\frac{\delta^2}{ 3|G|^4 \max\{||b||_2^2:b\in G\}}$. 
Choose a subsequence $v_{n_{k_i}}$ from $v_{n_k}$ such that $|\langle  v_{n_{k_j}}aa^*, v_{n_{k_i}}  \rangle| < \frac{\varepsilon_1}{2^i(i-1)}$, for any $a \in G$ and $1\le j \le i-1$. 
We call this new subsequence $(u_n)$. 
Then $\sum_{i \neq j} |\langle 1,a^* u_i^* u_j a  \rangle| 
< 2 \sum_{i=2}^{\infty} \sum_{j=1}^{i-1}\frac{\varepsilon_1}{2^i (i-1)}
=\varepsilon_1$ for every $a \in G$. 
Let $\varepsilon_2 = \frac{\delta^2}{3|G|^2}$. 
Then for any $n \in \mathbb{N}$, there exists $u \in \mathcal{U}(M)$, such that 
\begin{align}\label{ineq1}
    \sum_{i,j\le n, i \neq j,  a,b \in G} \big\langle u b u_i^* u_j b^* u^* - \tau(b u_i^* u_j b^*) \cdot 1 , a^* u_i^* u_j a \big\rangle < \varepsilon_2.
\end{align}
Indeed, if $ \big\langle (u b u_i^* u_j b^* u^* - \tau(b u_i^* u_j b^*) \cdot 1)_{i,j,a,b} , (a^* u_i^* u_j a)_{i,j,a,b} \big\rangle \ge \varepsilon_2$ (the inner product of two vectors in $\bigoplus_{\substack{i,j,a,b}}L^2(M)$ where $i,j\le n , i \neq j,  a,b \in G$) for every $u \in \mathcal{U}(M)$,
then
\begin{align*}
    \mathcal{S} = \overline{ \text{Conv} \big\{   (u b u_i^* u_j b^* u^* - \tau(b u_i^* u_j b^*) \cdot 1)_{i,j,a,b} : i,j\le n, i \neq j, u \in \mathcal{U}(M) \big\} }^{||\cdot||_2}
\end{align*}
does not contain $0$. Let $y = (y_{i,j,a,b})$ be the unique element in $\mathcal{S}$ with the smallest $||\cdot||_2$-norm. Then $y \neq 0$ and $uyu^* = y$ for all $ u \in \mathcal{U}(M)$. 
So each component $y_{i,j,a,b}$ is a scalar with trace $\tau(y_{i,j,a,b}) = 0$, which means $y = 0$ and this would be a contradiction.

Let $\theta_i = \text{Ad} (u_i)$, and $(F,\varepsilon)$ be as in \ref{thm4.1.6}. Eliminating finitely many $\theta_i$'s, 
we can assume all $\theta_i$'s satisfy the assumption in \ref{thm4.1.6}: subtracial, subunital, 
and $\max_{x \in F} || \theta_i (x)   - x ||_2 < \varepsilon$, 
since $\lim || \theta_i (x)   - x ||_2 = 0$ for any $x \in M$. 
Let $n > \frac{3|G|^4 \max_{a \in G } ||a||^2_{\infty} }{\delta^2}$ and $\phi = \frac{1}{n}\sum_{i=1}^n \theta_i$, then $\phi$ again satisfies the above assumptions in \ref{thm4.1.6}. 
By \ref{thm4.1.6}, $\sum_{a,b \in G} || \phi(aub) ||^2_2 \ge \frac{\delta^2}{|G|^2}$ for all $u \in \mathcal{U}(M)$. But by (\ref{ineq1}), there exist some $u \in \mathcal{U}(M) $ such that
\begin{align*}
    \sum_{a,b \in G} || \phi(aub) ||^2_2  = & \frac{1}{n^2} \Big(\sum_{\substack{a,b \in G\\ 1 \le i \le n}}  ||u_i aub u_i^* ||^2_2  +  \sum_{\substack{1 \le i \neq j \le n\\ a,b \in G}} \big\langle u b u_i^* u_j b^* u^* - \tau(b u_i^* u_j b^*) \cdot 1 , a^* u_i^* u_j a \big\rangle \\
    &+     \sum_{\substack{1 \le i \neq j  \le n\\ a,b \in G} }  \langle  \tau(b u_i^* u_j b^*) \cdot 1 , a^* u_i^* u_j a \rangle  \Big)\\
    <&\frac{1}{n^2} \Big( \sum_{a,b \in G} n ||aub||_2^2      + \varepsilon_2 +    \sum_{\substack{1 \le i \neq j  \le n\\ a,b \in G} }  ||b||_2^2  \big|\langle    1 , a^* u_i^* u_j a \rangle  \big|     \Big)\\
     < & \frac{\delta^2}{|G|^2}.
\end{align*}
The contradiction shows that \ref{thm4.1.6} implies $M$ does not have property Gamma.

Assuming \ref{thm4.1.6} and \ref{thm4.1.7}, we show \ref{thm4.1.8}. 
Let $F_1, G_1, \varepsilon_1, \delta_1$ be $F, G, \varepsilon, \delta$ in \ref{thm4.1.6}. 
By Lemma \ref{nongammasubtracialcpmap}, 
for $\varepsilon_2 = \min\{ \frac{\varepsilon_1}{2\max\{||x||_{\infty}:x \in F_1\} }, \frac{\delta_1}{2 |G_1|^2 \max\{||a||_{\infty}^2 :a \in G_1\}}\}$,
there exist a finite subset $F_2 \subset \mathcal{U}(M)$ and $\varepsilon_3 > 0$,
such that if $\Phi: M \to M$ is a subunital, 
normal completely positive map  with $\max_{x \in F_2}||  \Phi(x) - x||_2 \le \varepsilon_3$, 
then there exists a subunital subtracial normal completely positive map $\Psi: M \to M$ such that $||  \Psi(x) - \Phi(x)||_2 \le \varepsilon_2  $ for any $x \in (M)_1$.

Let $\varepsilon = \min \{ \frac{\varepsilon_1}{2} , \varepsilon_3\}$, 
$F = F_1 \cup F_2 \cup F_2^* \cup \{1\}$, 
$G = G_1$, 
and $\delta = \frac{\delta_1}{2}$. 
Let $\Phi: M \to M$ be a subunital, 
normal completely positive map  with $\max_{x \in F}||  \Phi(x) - x||_2 \le \varepsilon$.
Let $\Psi: M \to M$ be the subunital subtracial normal completely positive map with $||  \Psi(x) - \Phi(x)||_2 \le \varepsilon_2  $ for any $x \in (M)_1$.
Then for all $x \in F_1$, 
$||\Psi(x) - x||_2 \le || \Psi(x) - \Phi(x)||_2 + ||  \Phi(x) - x||_2 \le \frac{\varepsilon_1 ||x||_{\infty} }{2\max\{||x||_{\infty}:x \in F_1\} } + \frac{\varepsilon_1}{2} \le \varepsilon_1$. 
Applying \ref{thm4.1.6} to $\Psi$, 
we have $\sum_{a,b \in G_1} || \Psi(aub) ||_2 \ge \delta_1$ for every  $u \in \mathcal{U}(M)$.
Then 
\begin{align*}
    \sum_{a,b \in G} || \Phi(aub) ||_2 &\ge \sum_{a,b \in G} || \Psi(aub) ||_2 - |G|^2\max\{|| \Psi(aub) - \Phi(aub)||_2 : a,b\in G  \}\\
    &\ge \delta_1 -  |G|^2 \max\{||aub ||_{\infty}: a,b \in G\} \varepsilon_2\\
    &\ge \delta
\end{align*}
for every  $u \in \mathcal{U}(M)$.
\end{proof}

So far we have shown \ref{thm4.1.2} $\implies$ \ref{thm4.1.4}, \ref{thm4.1.7} and \ref{thm4.1.8}, and \ref{thm4.1.4} and \ref{thm4.1.5} $\implies$ \ref{thm4.1.1}. 
The last step is to prove \ref{thm4.1.5} (see Figure \ref{fig}). 
We will use the following notations and estimates. Let $\theta$ be a normal unital *-homomorphism $\theta: M  \xrightarrow{} p \mathbb{M}_{n} (M) p$ for some $n \in \mathbb{N}$, 
and projection $p \in \mathbb{M}_{n} (M)$. 
For $x =(x_{i,j}) \in \mathcal{M}(L^2(M)) = \cup_{n \ge 1} \mathbb{M}_n(L^2(M))$, we write $\big((\mathrm{Tr} \otimes \tau)(x^*x)\big)^{\frac{1}{2}}= ||x||_{2,(\mathrm{Tr} \otimes \tau)}$ to distinguish the $||\cdot||_2$-norm in the amplification from the $||\cdot||_2$-norm in $L^2(M)$.
For a finite subset $F \subset \mathcal{U}(M)$ and $\varepsilon>0$, 
let $\eta$ be a $(F, \varepsilon)$-almost central unit vector in the $M$-$M$-bimodule ${}_M(\bigoplus\limits_{1}^{n}{L^2(M)})p_{\theta(M)}$. 
We view $\eta$ as a row matrix in $\mathbb{M}_n(M)$ and thus $\eta\eta^*$ as an element of $M$. 
Denote $ |\eta^* | = (\eta \eta^* )^{1/2} \in M $.
Then $\tau( |\eta^* |  ) \le \tau( |\eta^* |^2  )^{1/2} = ||\eta||_{2,(\mathrm{Tr} \otimes \tau)}= 1$.
Since $\eta$ is $(F, \varepsilon)$-almost central, for any $x \in F$, $||x \eta - \eta \theta(x)||_{2,(\mathrm{Tr} \otimes \tau)}$, $||\theta(x) \eta^* - \eta^* x||_{2,(\mathrm{Tr} \otimes \tau)} < \varepsilon$. 
Then $||[\eta\eta^*,x]||_1 < 2\varepsilon$, and by the Powers-Størmer inequality $||x|\eta^* |x^* - |\eta^* |||_2^2 < 2\varepsilon$. 
By \ref{thm4.1.7} and Theorem \ref{gammaequivdef}, $M$ has spectral gap, i.e., assuming $F$ is large enough, there is $c>0$ such that $|| |\eta^* | - \tau(|\eta^* |)\cdot 1||_2 < (2\varepsilon)^{1/2}/c$.
Then again by the Powers-Størmer inequality, 
\begin{align*}
    || \eta\eta^* - (\tau(|\eta^* |))^2\cdot 1||_1  
    \le || |\eta^* | - \tau(|\eta^* |)\cdot 1||_2 || |\eta^* | + \tau(|\eta^* |)\cdot 1||_2 
    < 2(2\varepsilon)^{1/2}/c,
\end{align*}
    and
\begin{align*}
     |\tau(|\eta^* |) - 1| \le  |(\tau(|\eta^* |))^2 -1| = |\tau( (\tau(|\eta^* |))^2\cdot 1 - \eta\eta^*)| < 2(2\varepsilon)^{1/2}/c.
\end{align*}
Consider $u^*$ the partial isometry column matrix in the polar decomposition of $\eta^*$, $\eta^* = u^* |\eta^*|$ and $\eta = |\eta^* | u$. 
Then 
\begin{align}
  & || \eta - u||_{2,(\mathrm{Tr} \otimes \tau)} = || (|\eta^* | -1) u||_{2,(\mathrm{Tr} \otimes \tau)} 
   \le || |\eta^* | -1 ||_2 =  (2|\tau(|\eta^* |) - 1|)^{1/2}
   <4 \varepsilon^{1/4}/c^{1/2}, \notag \\ 
   &||xu - u\theta(x)||_{2,(\mathrm{Tr} \otimes \tau)} 
   \le 2|| \eta - u||_{2,(\mathrm{Tr} \otimes \tau)} + ||x \eta - \eta \theta(x)||_{2,(\mathrm{Tr} \otimes \tau)} 
   < \varepsilon + 8 \varepsilon^{1/4}/c^{1/2}  \label{xu-utheta}
\end{align}
for all $x \in F$. 
The row matrix $u$ is a partial isometry in $  \mathbb{M}_n(M) $ with $up = u$, 
$uu^*$ a projection in $ \mathbb{M}_n(M) $ with the only nonzero entry in the (1,1)-corner. 
So $uu^*$ is a projection in $M$ and $\tau(uu^*)\le 1$.
We have  
\begin{align}
    ||uu^* -1||_2  = &| \tau(\eta \eta^* - uu^*)|^{\frac{1}{2}}
    \le\big( || \eta - u||_{2,(\mathrm{Tr} \otimes \tau)} (|| \eta^*  ||_{2,(\mathrm{Tr} \otimes \tau)}  + ||  u||_{2,(\mathrm{Tr} \otimes \tau)} ) \big)^{1/2}
        < 4 \varepsilon^{1/8}/c^{1/4}, \notag\\
    ||x - u\theta(x)u^*||_2 &= || x -x uu^* + (xu - u\theta(x))u^*||_2 
     \le ||uu^* -1||_2 + ||xu - u\theta(x)||_{2,(\mathrm{Tr} \otimes \tau)}  \notag\\
    &< \varepsilon + 8 \varepsilon^{1/4}/c^{1/2} + 4 \varepsilon^{1/8}/c^{1/4}  \label{ad(u)_theta}
\end{align}
for all $x \in F$.

For two *-homomorphisms $\theta : M  \xrightarrow{} p  \mathbb{M}_{n } (M)  p$ and $\theta' : M  \xrightarrow{} p'  \mathbb{M}_{n' } (M)  p'$ where $p$ and $p'$ are projections in $\mathbb{M}_{n } (M)  $ and $ \mathbb{M}_{n' } (M)$ respectively, we say $\theta$ and $\theta'$ are \textit{equivalent} if there is some $\xi \in \mathbb{M}_{n \times n'}(M)$ such that $\xi \xi^* = p$, $\xi^* \xi = p' $ and $\theta' = \textrm{Ad}(\xi^*) \theta$.\\

Now we prove \ref{thm4.1.2} $\implies$ \ref{thm4.1.5} using \ref{thm4.1.7} and \ref{thm4.1.8}. 
Take a sequence of pairs $(F_i, \varepsilon_i)$, 
where $(F_i)$ is a increasing sequence of finite subsets in $\mathcal{U}(M)$ and $\varepsilon_i > 0$,
such that $\lim \varepsilon_i = 0$ and $\cup_i F_i$ is $||\cdot||_2$-dense in $\mathcal{U}(M)$.
Assume \ref{thm4.1.5} is false.
Then for each $i$,  there is a $M$-$M$-bifinite bimodule $\mathcal{H}_i \cong {}_M(\bigoplus\limits_{1}^{n_i}{L^2(M)})p_i {}_{\theta_i(M)}$, 
where the normal unital*-homomorphism $\theta_i: M  \xrightarrow{} p_i \mathbb{M}_{n_i} (M) p_i$ satisfies $\theta_i(M)' \cap p_i \mathbb{M}_{n_i} (M) p_i = \mathbb{C}p_i$, 
and $\mathcal{H}_i$ has a $(F_i, \varepsilon_i)$-almost central unit vector $\eta_i$ but no non-zero central vector. 
We will consider two cases: 1) there are infinitely many $\theta_i$'s that are pairwise not equivalent, and 2) $\theta_i$'s are all equivalent.

The polar decomposition $\eta_i= |\eta_i^*|u_i$ gives a sequence of maps $\phi_i : M \longrightarrow M$ where $\phi_i(x) = \textrm{Ad}(u_i) \theta_i(x)$. 
Then $\phi_i$'s are subunital, normal, completely positive and
by (\ref{ad(u)_theta}), $\lim ||\phi_i(x)-x||_2 = 0 $ pointwise on $M$. 
Let $F, G ,\varepsilon, \delta$ be as in \ref{thm4.1.8}.
Notice we can assume $F \subset \mathcal{U}(M)$.
By Lemma \ref{nongammasubtracialcpmap}, there exists a finite subset $\tilde{F} \in \mathcal{U}(M)$ and $\tilde{\varepsilon} > 0$, 
such that for any subunital, 
normal completely positive map $\phi: M \to M$ with $\max_{x \in \tilde{F}}||  \phi(x) - x||_2 \le \tilde{\varepsilon} $, 
there is a subunital subtracial normal completely positive map $\tilde{\phi}: M \to M$ such that $||  \tilde{\phi}(x) - \phi(x)||_2 \le 1$ for any $x \in (M)_1$.
We can assume $\{1\} \cup F \cup \tilde{F} \subset F_i= F_i^*$ and $\varepsilon_i + 8 \varepsilon_i^{1/4}/c^{1/2} + 4 \varepsilon_i^{1/8}/c^{1/4}  < \min\{ \varepsilon, \tilde{\varepsilon} \}$ for any $i$,
so $\phi_i$'s satisfy $\max_{x \in F \cup \tilde{F}}|| \phi_i(x) - x ||_2 \le \min\{ \varepsilon, \tilde{\varepsilon} \}$ by (\ref{ad(u)_theta}).
Then for each $\phi_i$, there is a subunital subtracial normal completely positive map $\tilde{\phi}_i: M \to M$ such that $||  \tilde{\phi_i}(x) - \phi_i(x)||_2 \le 1$ for any $x \in (M)_1$.
For each $n$, $\frac{1}{n}\sum_{i=1}^n \phi_i$ also satisfies the  assumptions in \ref{thm4.1.8}. 
By  \ref{thm4.1.8},  $\frac{1}{n^2} \sum_{a,b \in G}|| \sum_{i=1}^n u_i \theta_i(aub) u_i^* ||_2^2 \ge \delta^2 / |G|^2$, for any $u \in\mathcal{U}(M) $. 
Notice
\begin{align*}
    &\frac{1}{n^2} \sum_{a,b  \in G}|| \sum_{i=1}^n u_i \theta_i(aub) u_i^* ||_2^2\\
   \le&  \frac{1}{n^2} \sum_{a,b  \in G} \Big(\sum_{i=1}^n ||\phi_i(aub) ||_2^2 + \sum_{1 \le i \neq j\le n} \big\langle (\theta_i (u)\theta_i (b)u_i^* u_j\theta_j (b^*)\theta_j (u^*)), \theta_i (a^*)u_i^*u_j\theta_j(a) \big\rangle \Big).
\end{align*}
Let $n > \frac{8|G|^4   \max\{||a||^4_{\infty} : a \in G  \}}{\delta^2}$, then $\frac{1}{n^2} \underset{\substack{a,b  \in G \\ 1 \le i \le n}}{\sum} ||\phi_i(aub) ||_2^2 
    \le \frac{1}{n^2} \underset{\substack{a,b  \in G \\ 1 \le i \le n}}{\sum} (||\tilde{\phi_i}(aub) ||_2 + ||aub||_{\infty})^2 
  < \frac{\delta^2}{2|G|^2}$, and  
$   \frac{1}{n^2}  \sum_{i \neq j, i,j \le n,a,b  \in G} \big\langle (\theta_i (u)\theta_i (b)u_i^* u_j\theta_j (b^*)\theta_j (u^*)), \theta_i (a^*)u_i^*u_j\theta_j(a) \big\rangle > \frac{\delta^2}{2|G|^2}$
for all $u   \in\mathcal{U}(M)$. 
It follows that
\begin{align*}
  \mathcal{S} = \overline{ \text{Conv}\big\{   (\theta_i (u)\theta_i (b)u_i^* u_j\theta_j (b^*)\theta_j (u^*))_{i,j,a,b} : i,j \le n, i \neq j, a,b \in G, u \in \mathcal{U}(M)  \big\}}^{||\cdot||_2}  
\end{align*}
does not contain $0$. 
The unique element $s = (s_{i,j,a,b})$  with the smallest $||\cdot||_2$-norm in $\mathcal{S}$ must have a non-zero component. 
Since the set $\mathcal{S}$ is invariant under $(
L_{\theta_i(u)}R_{\theta_j(u^*)})_{i,j,a,b}$ where $i,j \le n, i \neq j, a,b \in G$ for any $u \in \mathcal{U}(M)$, 
so is $s$. 
Let $\xi \in \mathcal{M}(M)$ be one of the non-zero components of $s$.
Then  for corresponding $i$ and $j$,
$\theta_i(u) \xi  = \xi \theta_j(u)$ for any $u \in \mathcal{U}(M)$. 
So $\xi^*\xi \in \theta_j(M)' \cap p_j \mathbb{M}_{n_j} (M) p_j =\mathbb{C}p_j$ and $\xi\xi^* \in \theta_i(M)' \cap p_i \mathbb{M}_{n_i} (M) p_i =\mathbb{C}p_i$.
Since $p_i, p_j$ are projections, we have $\mathrm{Tr} \otimes \tau (p_i) = \mathrm{Tr} \otimes \tau (p_j)$.
So we can normalize $\xi$ such that $\xi^*\xi =p_j$ and  $\xi\xi^*=p_i$. 
Then $\theta_j = \textrm{Ad}(\xi^*) \theta_i$, that is,  $\theta_i$ and $\theta_j$ are equivalent.

If $\theta_i$'s come from infinitely many classes then we can select a subsequence of $\theta_i$'s such that $\theta_i$'s in the subsequence are pairwise not equivalent. 
Applying the above to the subsequence will give a contradiction. 
So we can restrict to the case where $\theta_i$'s come from only finitely many equivalent classes, and further, just one. 
We choose from the $\theta_i$'s one $\theta: M  \xrightarrow{} p \mathbb{M}_{n} (M) p$, and $\mathrm{Tr} \otimes \tau (p_i) = \mathrm{Tr} \otimes \tau (p)$, $\theta_i = \textrm{Ad}(\xi_{i}^*) \theta$ for all $i$ where $\xi_i^*\xi_i =p_i$ and  $\xi_i\xi_i^*=p$. 
Let $v_i = \xi_i u_i^*$ and the projection $q_i  := v_iv_i^* \le p$ 
(recall $u_i$ is from the previous polar decomposition of the almost-central vector $\eta_i$). 
Then for any $x \in F_i$,
\begin{align}
    &|| v_i^* \theta(x) v_i -x||_2 = ||\phi_i(x) - x ||_2 <\varepsilon_i + 8 \varepsilon_i^{1/4}/c^{1/2} + 4 \varepsilon_i^{1/8}/c^{1/4} ,
    \label{v*thetav-x}\\
    &|| q_i \theta(x) q_i  - v_i x v_i^* ||_{2,(\mathrm{Tr} \otimes \tau)} \le || v_i^* \theta(x) v_i -x||_2,
    \label{qthetaq-vxv*}\\
    &||v_i^* v_i-1 ||_2 = ||u_i u_i^* -1||_2 = ||\phi_i(1) - 1 ||_2 < \varepsilon_i + 8 \varepsilon_i^{1/4}/c^{1/2} + 4 \varepsilon_i^{1/8}/c^{1/4}. 
    \label{v*v-1}
\end{align} 
It follows from (\ref{xu-utheta}) that, 
\[
||[u_i^*u_i , \theta_i(x)]||_{2,(\mathrm{Tr} \otimes \tau)} \le  ||xu_i - u_i\theta(x)||_{2,(\mathrm{Tr} \otimes \tau)} + ||x^*u_i - u_i\theta(x^*)||_{2,(\mathrm{Tr} \otimes \tau)}
   < 2(\varepsilon_i + 8 \varepsilon_i^{1/4}/c^{1/2}) ,
\]
and 
\[
||q_i \theta(x) - \theta(x) q_i||_{2,(\mathrm{Tr} \otimes \tau)}  
= ||\xi_i  \big( u_i^*u_i \theta_i(x) - \theta_i(x) u_i^*u_i \big) \xi_i^* ||_{2,(\mathrm{Tr} \otimes \tau)}  
 < 2(\varepsilon_i + 8 \varepsilon_i^{1/4}/c^{1/2})
 \]
 for any $x \in F_i$.
For any free ultrafilter $\omega$ on $\mathbb{N}$, $(q_i)_i \in \theta(M)' \cap (p \mathbb{M}_n (M) p)^{\omega} $. 
Since $[p \mathbb{M}_n (M) p\textbf{ : } \theta(M)] < \infty$ and $M$ does not have property Gamma, 
by Lemma \ref{trivalrelcommutant}, $\theta(M)' \cap (p \mathbb{M}_n (M) p)^{\omega} = \mathbb{C}p$.
So $||(q_i) - p||_{2,\omega}= 0 $. 
Selecting a subsequence, we have
\begin{align}
    \lim_i||q_i - p||_{2,(\mathrm{Tr} \otimes \tau)}= 0 , \label{qi-p}
\end{align}
and for any $x \in F_i$,
    \[||\theta(x) - v_i x v_i^* ||_{2,(\mathrm{Tr} \otimes \tau)} \le || q_i \theta(x)q_i - v_i x v_i^*||_{2,(\mathrm{Tr} \otimes \tau)} + || q_i \theta(x)q_i  -  \theta(x) q_i   ||_{2,(\mathrm{Tr} \otimes \tau)} +  || \theta(x)q_i  -  \theta(x)    ||_{2,(\mathrm{Tr} \otimes \tau)}.\]
By density of $\cup_i F_i$ and (\ref{qthetaq-vxv*}), 
\begin{align}
    \lim_i  ||\theta(x) - v_i x v_i^* ||_{2,(\mathrm{Tr} \otimes \tau)}  =0 \label{theta-vxv*}
\end{align}
for all $x \in \mathcal{U}(M)$.
Then 
\begin{align*}
||v_j^*v_ix -x v_j^*v_i ||_2
\le& || v_j^*v_i x (1 - v_i^* v_j v_j^* v_i) ||_2 +   ||( v_j^*v_i x v_i^* v_j -x) v_j^*v_i||_2\\
    \le& || 1 - v_i^* v_j v_j^* v_i ||_2 + ||v_j^*v_i x v_i^*v_j -x ||_2\\
    \le&  || 1 - v_i^*  v_i ||_2 + || p -q_j  ||_{2,(\mathrm{Tr} \otimes \tau)}  +    || v_i x v_i^* - \theta(x) ||_{2,(\mathrm{Tr} \otimes \tau)} + || v_j^* \theta(x) v_j -x ||_2,
\end{align*}
and by (\ref{v*v-1}), (\ref{qi-p}), (\ref{theta-vxv*}), and (\ref{v*thetav-x}), 
\begin{align}
    \lim_{i,j \to \infty}||v_j^*v_ix -x v_j^*v_i ||_2 = 0 \label{[vjvi,x]}
\end{align}
for all $x \in \mathcal{U}(M)$.
 Then by spectral gap, 
 there exists a finite subset $F_{sg} \subset \mathcal{U}(M)$ such that
 \begin{align*}
 |1 - |\tau(v_j^*v_i)|^2| = & |1 -\tau(v_j^*v_i v_i^* v_j)| + || v_j^*v_i-\tau(v_j^*v_i)||_2^2 \\
 \le & |1 -\tau(v_j^*  v_j)| + |\tau(v_j^*(q_i -p)v_j)| + \big(\max_{x \in F_{sg}}||v_j^*v_ix -x v_j^*v_i ||_2/c \big)^2.
 \end{align*}
By (\ref{v*v-1}), (\ref{qi-p}), and (\ref{[vjvi,x]}), for any $\epsilon > 0$, there is a $N_{\epsilon}$, such that  $i,j > N_{\epsilon}$ implies $|1 - |\tau(v_j^*v_i)|^2| < \epsilon$.
 Selecting a subsequence, we can assume for all $i$,
 \begin{align*}
      \big|1 - |\tau(v_{i+1}^*v_i) |\big| , |1 -  \tau(v_i^*v_i ) | < 2^{-2(i+1)}.
 \end{align*}
 Multiply each $v_i$ by a complex number of absolute value $1$ to obtain a sequence $(w_i)$ such that $\mathrm{Ad}(w_i) = \mathrm{Ad}(v_i)$, and $\tau(w_{i+1}^* w_i)  = |\tau(v_{i+1}^*v_i)| \in \mathbb{R}_+$. For the sequence $(w_i)$ we have
 \begin{align*}
     ||w_{i+1} - w_i||_{2,(\mathrm{Tr} \otimes \tau)}^2
      = &\tau(w_{i+1}^* w_{i+1}) +  \tau(w_i^* w_i) - 2 \operatorname{Re}\tau(w_{i+1}^* w_i)  \\
      = &\tau(v_{i+1}^* v_{i+1}) +  \tau(v_i^* v_i) - 2 |\tau(v_{i+1}^* v_i) |\\
      \le & |1 - \tau(v_{i+1}^* v_{i+1})| + |1 - \tau(v_i^* v_i)| + 2 \big(1 - |\tau(v_{i+1}^* v_i) |^2 \big)\\
      \le& 2^{-2i}.
\end{align*}
 Then
 $(w_i)$ converges strongly to some $w$, and $\theta(x) = w x w^*$ by (\ref{theta-vxv*}). 
 It follows from (\ref{v*v-1}) that $w^*w = 1$, and the vector $w^*$ is a non-zero central vector of the $M$-$M$-bimodule $_M(\bigoplus\limits_{1}^{n}{L^2(M)})p {}_{\theta(M)}$, which concludes the proof of Theorem \ref{nwmixingthm}.

\subsection{Back to the theorem of Bekka and Valette}
Using Theorem \ref{nwmixingthm}, 
we can give a proof of Bekka and Valette's characterization of property (T) (Theorem \ref{BV}) for countable ICC groups. 
For an ICC countable group $G$, the group von Neumann algebra $L(G)$ is a II$_1$ factor. 
Suppose $G$ does not have property (T), then the group von Neumann algebra $M = L(G)$ does not have property (T). 
By Theorem \ref{nwmixingthm}, there exists a $M$-$M$-bimodule $\mathcal{H}$, 
such that $\mathcal{H}$ has almost central unit vectors ($\xi_i$) and $\mathcal{H}$ is left (and right) weakly mixing. 
Let $\phi_i$ be the corresponding completely positive map on $M$ to $\xi_i$,
and $\varphi_i$ the positive definite function on $G$ corresponding to $\phi_i$. 
Then since ($\xi_i$) is almost central, 
$\varphi_i(s) = \tau_M\big(\phi_i\big( u(s) \big)u(s)^*\big) \to 1$ for all $s \in G$. 
Then for the unitary representation $(\pi_{\varphi_i}, \mathcal{H}_{\varphi_i}, \xi_{\varphi_i} )$,
we have that $||\pi_{\varphi_i}(s)\xi_{\varphi_i} - \xi_{\varphi_i} || \to 0$ for all $s \in G$, and each $\pi_{\varphi_i}$ is weakly mixing.

\end{document}